%% file: main.tex
\documentclass[11pt]{article}

\usepackage[T1]{fontenc}
\usepackage[margin=1in]{geometry}
\usepackage{amsmath,amssymb,amsthm,mathtools,bm,mathrsfs}
\usepackage{enumitem}
\usepackage{booktabs}
\usepackage{microtype}
\usepackage{tikz}
\usetikzlibrary{arrows.meta,positioning,calc,fit,shapes.geometric}
\usepackage[hidelinks]{hyperref}

\newcommand{\E}{\mathbb E}
\newcommand{\KL}{D_{\mathrm{KL}}}
\newcommand{\cA}{\mathcal A}
\newcommand{\cB}{\mathcal B}
\newcommand{\cC}{\mathcal C}
\newcommand{\cG}{\mathcal G}
\newcommand{\cJ}{\mathcal J}
\newcommand{\cQ}{\mathcal Q}
\newcommand{\cR}{\mathcal R}
\newcommand{\cM}{\mathcal M}

\newcommand{\norm}[1]{\left\lVert #1\right\rVert}

\newtheorem{theorem}{Theorem}[section]
\newtheorem{proposition}[theorem]{Proposition}

\newtheorem{lemma}[theorem]{Lemma}
\theoremstyle{definition}
\newtheorem{definition}[theorem]{Definition}
\theoremstyle{remark}

\title{Decision-Sufficient Posterior Approximation}
\author{Sean Plummer}
\date{\today}

\begin{document}
\maketitle

\begin{abstract}
We investigate the consequences of requiring a posterior approximation to preserve a specified downstream decision problem.  A target posterior $P$ and loss determine a regret geometry on actions, a baseline approximation $Q_0$ determines the forward-Kullback--Leibler information required to induce action changes, and a restricted approximation family $\cQ$ determines which such changes are available.  Contracting KL divergence over Bayes-action fibers gives exact distances to decision adequacy and decision failure together with the least-informative posterior deformations that reach either side of the decision boundary.

In regular finite-dimensional problems, the target and baseline constructions have quadratic local limits: a target regret Hessian $G$ and a baseline information metric $J_I$.  Their generalized eigenproblem $Gv=\gamma J_Iv$ orders local decision directions by regret consequence per unit information cost and induces a tolerance-dependent effective dimension.  For restricted approximation families, the tangent image separates decision coverage from information efficiency: a family may miss consequential decision directions, or it may realize reachable directions only at excess Fisher cost.  The resulting framework provides decision-relative criteria for comparing and designing posterior approximation families.
\end{abstract}

\input{introduction}

\input{decision-adequacy}

\input{posterior-accessibility}

\input{local-geometry}

\input{effective-dimension}

\input{restricted-families}

\input{examples}

\input{discussion}

\section*{Acknowledgement of AI assistance}

The author used OpenAI's ChatGPT as an interactive research and writing assistant in developing this manuscript.  Assistance included exploring mathematical formulations, proposing and checking derivations, drafting and revising exposition and LaTeX, developing figures, and constructing numerical stress tests.  Outputs from the system were treated as suggestions requiring scrutiny, not as independent mathematical verification.  The author takes responsibility for the mathematical claims, references, and final contents of the paper.

\appendix

\input{appendix-regularity}

\input{appendix-contracted-information}

\input{appendix-local-proofs}

\input{appendix-restricted-families}

\bibliographystyle{plain}
\bibliography{references}

\end{document}

%% file: introduction.tex
\section{Introduction}
\label{sec:introduction}

We investigate the consequences of requiring a posterior approximation to preserve a specified downstream decision problem.  A Bayesian decision problem maps posterior uncertainty into an action through a loss.  Once that map is fixed, posterior discrepancies acquire different operational consequences: some move the chosen action through directions with substantial regret, while others have little effect at the decision scale of interest.  This suggests studying posterior approximation through the geometry induced by the decision rather than through a single distribution-wide discrepancy.

The construction separates three roles.  The target posterior $P$ and loss determine \emph{decision consequence}: which actions are acceptable and how costly nearby action errors are.  A baseline approximation $Q_0$ determines \emph{information accessibility}: the forward-KL information required to move the Bayes action in a prescribed direction.  A specified approximation family $\cQ$ inside an ambient candidate class of admissible measures determines \emph{family representability}: which of those decision movements are available inside the family and whether they can be realized at the ambient minimum Fisher cost.  The target posterior, baseline approximation, ambient candidate class, and restricted family play distinct roles throughout the paper; in particular, the target posterior need not belong to $\cQ$.

Using classical tools from Bayesian decision theory, I-projection, semiparametric efficiency, information geometry, and approximation theory, we develop a decision-relative theory of posterior approximation.  Contracting KL divergence over Bayes-action fibers produces exact distances to decision adequacy and decision failure.  In regular finite-dimensional problems, the target regret and baseline information constructions have quadratic local limits, represented by matrices $G$ and $J_I$.  Their generalized eigenproblem produces decision--information modes, ordered by regret consequence per unit information cost, and a tolerance-dependent effective dimension.  For restricted approximation families, the same geometry yields separate diagnostics for decision coverage and information efficiency.

These ingredients connect several established lines of work.  Loss-calibrated Bayesian approximation evaluates approximate inference through downstream utility or regret \cite{lacostejulien2011loss}.  Decision-theoretic divergences and information--risk relations describe how a loss selects task-relevant distinctions between distributions \cite{dawid2007geometry,williamson2024bridge}.  I-projection characterizes minimum relative-entropy perturbations under distributional constraints \cite{csiszar1975idivergence}, while Watson and Holmes study decision stability inside Kullback--Leibler neighborhoods of an approximate model \cite{watson2017approximate}.  Estimating-function and semiparametric theory provide the sensitivity, covariance, and tangent-space efficiency structure used in the local construction \cite{godambe1960optimum,godambeheyde1987quasi,vanderVaart1998}.  Goal-oriented Bayesian inverse problems use a related spectral reduction to identify directions that are both data-informed and relevant to a quantity of interest \cite{spantini2017goal}.  The contribution here is to organize these classical constructions around posterior approximation for a specified decision problem and to carry that organization through exact global contractions, local spectral reduction, and restricted-family diagnostics.

\subsection{The three levels of the problem}

Let $\theta$ denote the latent quantity, let $\cA$ be an action space, and let $L(\theta,a)$ be a loss.  Let $\mathfrak C_{Q_0}$ denote the ambient candidate class of decision-admissible probability measures used in the information contraction and anchored at the baseline $Q_0\in\mathfrak C_{Q_0}$, and let $\cQ\subseteq\mathfrak C_{Q_0}$ denote a specified restricted family when family geometry is under study.  The basic typing is
\[
Q\in\mathfrak C_{Q_0}
\stackrel{\Gamma}{\longmapsto}
a\in\cA.
\]
In the unique-action case, $a=\Gamma(Q)$; with ties, $\Gamma$ is understood set-valuedly through $\cB_0(Q)$.  The target posterior $P$ is conceptually separate from the candidate class $\mathfrak C_{Q_0}$: it determines the Bayes risk $R_P(a)=\E_P L(\theta,a)$ and the corresponding regret geometry, and it need not belong to $\cQ$ or to $\mathfrak C_{Q_0}$ unless a particular result explicitly requires that membership.  Starting from $Q_0$, we ask how much forward-KL information is required to make a specified action Bayes-optimal within the ambient candidate class $\mathfrak C_{Q_0}$.  This produces the ambient information cost on action space.  If $Q_0\in\cQ$, we can then compare that benchmark with the movements available inside the specified family.  Variational inference is recovered by choosing $\cQ$ to be a variational family.

In a regular finite-dimensional regime, these three levels reduce locally to a particularly simple picture.  The target side contributes a regret Hessian $G$, while the baseline side contributes an information metric $J_I$.  The generalized eigenproblem
\[
Gv_i=\gamma_iJ_Iv_i
\]
ranks action directions by regret curvature relative to the information required to induce them.  A restricted family then supplies a local tangent image in action space.  Its alignment with the leading decision--information modes determines decision coverage, while the Fisher cost of its lifts determines information efficiency.

\begin{figure}[t]
\centering
\begin{tikzpicture}[
  >=Latex,
  every node/.style={font=\footnotesize}
]
  % (a) Coverage in action-displacement space
  \node[font=\small\bfseries] at (3.0,3.95) {(a) Coverage};
  \node[text=black!65] at (3.0,3.62) {action-displacement space};

  \coordinate (oA) at (0.55,0.55);
  \coordinate (hT) at (3.85,1.31);
  \coordinate (h) at (4.65,3.18);

  \draw[thick] (0.20,0.47) -- (5.75,1.75);
  \node[anchor=south west] at (4.85,1.85) {$S_T=B(T)$};

  \fill (oA) circle (1.4pt);
  \node[below left] at (oA) {$0$};

  \draw[->,thick] (oA) -- (h);
  \node[above] at (h) {$h$};

  \draw[->] (oA) -- (hT);
  \fill (hT) circle (1.3pt);
  \node[anchor=north] at (3.8,1.04) {$h_T=\Pi^{G}_{S_T}h$};

  \draw[->,dashed] (hT) -- (h);
  \node[align=left, anchor=west, text=black!70] at (4.90,2.94)
    {\scriptsize unavailable\\[-1pt]\scriptsize residual};

  \node[text=black!70] at (3.0,-0.10)
    {$\eta_T$: worst-case residual regret};

  % divider
  \draw[black!20] (6.65,-0.05) -- (6.65,4.05);

  % (b) Information efficiency in score space
  \node[font=\small\bfseries] at (10.25,3.95) {(b) Information efficiency};
  \node[text=black!65] at (10.25,3.62) {score space};

  \coordinate (oS) at (7.65,0.55);
  \coordinate (samb) at (10.75,0.55);
  \coordinate (sres) at (10.75,2.16);

  \draw[thick] (7.35,0.39) -- (12.95,3.28);
  \node[anchor=south] at (12.25,3.27) {$T$};

  \draw[dashed] (10.75,0.18) -- (10.75,3.30);
  \node[anchor=north west, align=left, text=black!70] at (11.04,1.58)
    {\scriptsize affine fiber\\[-1pt]$\{s:B[s]=h_T\}$};

  \fill (oS) circle (1.4pt);
  \node[below left] at (oS) {$0$};

  \draw[->,thick] (oS) -- (samb);
  \fill (samb) circle (1.3pt);
  \node[anchor=west] at (10.99,0.59) {$s_{h_T}^*$};

  \draw[->] (oS) -- (sres);
  \fill (sres) circle (1.3pt);
  \node[above left] at (10.58,2.30) {$s_{T,h_T}^*$};

  \node[text=black!70] at (10.25,-0.10)
    {$\kappa_T$: worst-case Fisher-energy inflation};
\end{tikzpicture}
\caption{Restricted-family decision geometry.  Left: $S_T=B(T)$ contains the reachable first-order action displacements; the $G$-projection residual $h-h_T$ is the local coverage error underlying $\eta_T$.  Right: for a reachable displacement $h_T$, the ambient minimum-Fisher-energy lift $s_{h_T}^*$ is the closest point to the origin on the affine fiber $\{s:B[s]=h_T\}$, while restriction to the linear tangent space $T$ may require the higher-energy lift $s_{T,h_T}^*$.  Taking the corresponding worst cases over information-bounded or reachable directions gives $\eta_T$ and $\kappa_T$.}
\label{fig:decision-relative-schematic}
\end{figure}

\subsection{Contributions and scope}

The paper develops the construction in three stages.  First, regret-acceptable action sets and the contracted information $I_{Q_0}(a)$ define exact global distances to decision adequacy and strict failure.  Second, in the regular local regime, the pair $(G,J_I)$ determines decision--information modes and a tolerance-dependent effective dimension.  Third, for a restricted approximation family, the tangent image identifies the decision directions represented by the family, while restricted Fisher energy measures the additional information cost of reaching those directions.

The goal-oriented linear--Gaussian reduction of Spantini et al. \cite{spantini2017goal} is recovered as a special case of the decision--information eigenproblem in Section~\ref{sec:effective-dimension}.  Here the two quadratic forms arise directly from downstream Bayes regret and baseline-relative information accessibility, so the construction applies beyond linear inverse problems and beyond prior-to-posterior reductions.

The local theory concerns regular finite-dimensional decision functionals with nondegenerate first-order response.  The global regret and information contractions remain well defined outside that regime; Section~\ref{sec:discussion} discusses boundary, discontinuous, and higher-order extensions.

\subsection{Organization}

Sections~\ref{sec:decision-adequacy} and~\ref{sec:posterior-accessibility} introduce the target and baseline constructions.  Sections~\ref{sec:local-geometry}--\ref{sec:restricted-families} develop the local geometry, oracle effective dimension, and restricted-family diagnostics.  Section~\ref{sec:examples} works through predictive scoring, mean-field Gaussian restrictions, and a small structured-Gaussian comparison.  Section~\ref{sec:discussion} discusses implications for variational-family design and extensions beyond the regular local regime.

%% file: decision-adequacy.tex
\section{Decision adequacy: what counts as good enough?}
\label{sec:decision-adequacy}

Let $P$ be the target posterior, let $\cA$ be the action space, and let $L(\theta,a)$ be the downstream loss.  Write
\[
R_P(a)=\E_P L(\theta,a),
\qquad
R_P^*=\inf_{b\in\cA}R_P(b),
\qquad
\Delta_P(a)=R_P(a)-R_P^*.
\]
The centered risk $\Delta_P$ records the decision consequence of an action error under the target posterior.

\subsection{Regret-acceptable actions}

\begin{definition}[Regret-acceptable action set]
For $\epsilon\ge0$, define
\[
\cB_\epsilon(P)=\{a\in\cA:\Delta_P(a)\le\epsilon\}.
\]
We call the nested family $\mathscr B(P)=\{\cB_\epsilon(P):\epsilon\ge0\}$ the \emph{Bayes-regret filtration}, or simply the \emph{regret filtration}.
\end{definition}

Here ``filtration'' means only a nested family of regret sublevel sets.  At zero tolerance, $\cB_0(P)=\arg\min_{a\in\cA}R_P(a)$ is the Bayes correspondence.  As $\epsilon$ grows, the set records which additional actions become acceptable for the target problem.

\begin{proposition}[Reconstruction of Bayes regret]
\label{prop:regret-reconstruction}
For every $a\in\cA$,
\[
\Delta_P(a)
=
\inf\{\epsilon\ge0:a\in\cB_\epsilon(P)\}.
\]
Hence $\mathscr B(P)$ and the centered risk $a\mapsto R_P(a)-R_P^*$ determine one another.
\end{proposition}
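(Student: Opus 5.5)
The plan is to prove the identity by unwinding the definition of $\cB_\epsilon(P)$. Then the ``determine one another'' clause follows in both directions.

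Fix $a\in\cA$ and set $E_a=\{\epsilon\ge0:a\in\cB_\epsilon(P)\}$. By definition, $a\in\cB_\epsilon(P)$ iff $\Delta_P(a)\le\epsilon$, so
\[
E_a=\{\epsilon\ge0:\epsilon\ge\Delta_P(a)\}.
\]
First I check that $\Delta_P(a)\ge0$, since $R_P(a)\ge\inf_b R_P(b)=R_P^*$. This needs $\Delta_P(a)$ to be well defined. The standing assumption I will make explicit is that $R_P^*$ is finite and $R_P(a)<\infty$; otherwise $\Delta_P(a)=+\infty$, $E_a=\emptyset$, and the convention $\inf\emptyset=+\infty$ gives equality anyway. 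In the finite case, $E_a=[\Delta_P(a),\infty)$, a closed half-line, and its infimum is $\Delta_P(a)$, attained. This yields the displayed formula.

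For the equivalence, I argue both directions separately.
\begin{itemize}[leftmargin=*]
\item The filtration $\mathscr B(P)$ determines the centered risk $a\mapsto\Delta_P(a)$ by the formula just proved.
\item Conversely, the centered risk determines each set $\cB_\epsilon(P)$ as a sublevel set, directly from the definition.
\end{itemize}
Hence the map $\Delta_P\mapsto\mathscr B(P)$ and the map $\mathscr B(P)\mapsto\Delta_P$ are mutually inverse on the relevant objects.

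The only step needing care is the degenerate case, when $R_P^*=-\infty$ or $R_P(a)=\infty$. I would handle it either by the extended-real conventions above or by noting the paper implicitly assumes finite Bayes risk. There is no substantive obstacle beyond this.
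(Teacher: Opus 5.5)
Your proposal is correct and follows the paper's proof: the paper likewise notes that the admissible tolerances are exactly the half-line $[\Delta_P(a),\infty)$, which implicitly uses $\Delta_P(a)\ge0$, and reads off the infimum, with the ``determine one another'' clause immediate from that formula together with the sublevel-set definition. Your degenerate-case remarks are harmless, but when $R_P(a)=R_P^*=\pm\infty$ the difference is undefined rather than $+\infty$, so the cleanest course is to assume finite $R_P^*$, as the paper implicitly does.
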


\begin{proof}
By definition, $a\in\cB_\epsilon(P)$ if and only if $\Delta_P(a)\le\epsilon$.  The admissible tolerances are therefore $[\Delta_P(a),\infty)$.
\end{proof}

The filtration repackages Bayes regret as nested sets of acceptable actions.  Its zero level retains exact optimality, while its positive levels retain the shape of the loss around the optimum.

\subsection{Exact decision equivalence versus positive-tolerance structure}

The zero level induces the equivalence relation $P\sim_0 Q$ if and only if $\cB_0(P)=\cB_0(Q)$.  When the Bayes action is unique, write it as $\Gamma(P)$, so $P\sim_0Q$ means $\Gamma(P)=\Gamma(Q)$.  Many posterior laws can therefore be exactly equivalent for the decision.

Exact Bayes equivalence leaves the local cost of action error unspecified.  In a regular problem with isolated interior Bayes action $a_P$ and twice differentiable risk,
\[
\Delta_P(a_P+h)=\tfrac12h^\top G_P h+o(\|h\|^2),
\qquad
G_P=\nabla_a^2R_P(a_P).
\]
The small-tolerance shape of $\cB_\epsilon(P)$ therefore records which nearby action directions accumulate regret most quickly.

For some losses, the whole filtration factors through the Bayes action.  More precisely, suppose there is a nonnegative action-space discrepancy $d$ such that, for every posterior in a class,
\[
\Delta_P(a)=d\!\left(\Gamma(P),a\right).
\]
Then all regret levels are determined by $\Gamma(P)$.  Squared-error estimation has this property.  Quantile loss generally does not: the exact quantile determines the zero-level action, while positive regret also depends on how probability mass is distributed around it.

\subsection{Decision adequacy}

\begin{definition}[Decision adequacy]
An approximate posterior $Q$ is \emph{$\epsilon$-adequate} for target posterior $P$ if
\[
\cB_0(Q)\cap\cB_\epsilon(P)\neq\varnothing.
\]
If the Bayes action under $Q$ is unique, this reduces to $\Delta_P(\Gamma(Q))\le\epsilon$.  In that case we call $Q$ \emph{strictly $\epsilon$-adequate} when $\Delta_P(\Gamma(Q))<\epsilon$.
\end{definition}

With nonunique Bayes actions, adequacy is existential: at least one Bayes action under $Q$ must be target-acceptable.  A uniform version would require $\cB_0(Q)\subseteq\cB_\epsilon(P)$, so that every Bayes action under $Q$ is acceptable.  The regular local theory assumes unique Bayes actions, where these notions coincide.

Decision adequacy is deliberately indexed by the downstream task.  At positive tolerance, two posterior laws can induce different Bayes actions while both remaining adequate for the same target decision problem.  Applications that require the full regret landscape, simultaneous preservation of several losses, or distribution-wide uncertainty statements can impose correspondingly richer adequacy requirements.

\subsection{Three simple cases}

The mean, quantile, and classification examples show three qualitatively different shapes for the regret filtration.

\paragraph{Squared-error mean.}
Let $g(\theta)$ be scalar and $L(\theta,a)=(a-g(\theta))^2$.  With $m_P=\E_Pg(\theta)$,
\[
\Delta_P(a)=(a-m_P)^2,
\qquad
\cB_\epsilon(P)=[m_P-\sqrt\epsilon,m_P+\sqrt\epsilon].
\]
Every regret level is determined by the posterior mean alone.

\paragraph{Quantile loss.}
For pinball loss at level $\tau$, assume $\E_P|Y|<\infty$ and let $q_\tau$ be the unique target $\tau$-quantile.  Then
\[
\Delta_P(a)=\int_{q_\tau}^{a}\{F_P(t)-\tau\}\,dt.
\]
If $F_P$ has density $f_P$ continuous and positive at $q_\tau$, then
\[
\Delta_P(q_\tau+u)=\tfrac12f_P(q_\tau)u^2+o(u^2).
\]
Two posteriors can therefore have the same exact quantile but different local regret for nearby actions.

\paragraph{Binary classification.}
Under zero-one loss with $\eta_P=P(Y=1)\neq1/2$, let
\[
a_P=\mathbf 1\{\eta_P>1/2\}
\]
denote the unique Bayes class.  The wrong class has regret $\rho_P=|2\eta_P-1|$, so
\[
\cB_\epsilon(P)
=
\begin{cases}
\{a_P\},&0\le\epsilon<\rho_P,\\
\{0,1\},&\epsilon\ge\rho_P.
\end{cases}
\]
Here the acceptable set jumps rather than expanding through a quadratic neighborhood.  This is the simplest example of behavior outside the regular local theory developed later.

%% file: posterior-accessibility.tex
\section{Posterior Accessibility: Distance to Adequacy and Failure}
\label{sec:posterior-accessibility}

The regret filtration specifies which actions are acceptable for the target.  Posterior accessibility adds a baseline-relative cost: how much forward-KL information must be introduced before a given action becomes Bayes-optimal?

\subsection{Action fibers and baseline-relative accessibility}

For $a\in\cA$, define the Bayes-action fiber within the ambient candidate class $\mathfrak C_{Q_0}$ by
\[
\cC_a=\{Q\in\mathfrak C_{Q_0}:a\in\cB_0(Q)\}.
\]
When the Bayes action is unique, with Bayes-action map $\Gamma$, the sets $\cC_a=\{Q\in\mathfrak C_{Q_0}:\Gamma(Q)=a\}$ are the fibers of $\Gamma$ and partition the ambient candidate class.  When ties are possible, the fibers can overlap because one posterior may have several Bayes-optimal actions.

Fix a baseline approximation $Q_0$.  Define
\begin{equation}
\label{eq:contracted-information-v2}
I_{Q_0}(a)
=
\inf_{Q\in\cC_a}\KL(Q\|Q_0).
\end{equation}
We call $I_{Q_0}(a)$ the \emph{contracted information} of $a$ relative to $Q_0$.  It is the least forward-KL change needed to move from the baseline to any posterior that makes $a$ Bayes-optimal.

Equation~\eqref{eq:contracted-information-v2} is an I-projection onto the action fiber \cite{csiszar1975idivergence}.  It is also the smallest KL-neighborhood radius at which $a$ becomes Bayesian-admissible in the sense of Watson and Holmes \cite{watson2017approximate}.  Thus $I_{Q_0}$ equips action space with a baseline-relative accessibility cost.

Unless stated otherwise, the infimum ranges over the ambient candidate class $\mathfrak C_{Q_0}$ rather than a specified restricted family.  This ambient problem is an information-theoretic benchmark: it identifies the cheapest candidate deformation capable of producing the action, against which restricted families can later be compared.  The same construction applies to any baseline approximation $Q_0$; variational restrictions enter only when we replace $\mathfrak C_{Q_0}$ by a specified variational family in Section~\ref{sec:restricted-families}.

The orientation $\KL(Q\|Q_0)$ is intentional.  It measures forward-KL information required to move away from the baseline and differs from the usual variational fidelity objective $\KL(Q\|P)$.  If the infimum is attained, write
\[
Q_a^*\in\arg\min_{Q\in\cC_a}\KL(Q\|Q_0).
\]
The value $I_{Q_0}(a)$ gives the information cost of the action displacement, while $Q_a^*$ gives the least-informative posterior deformation that achieves it.

\subsection{Distance to adequacy}

The posterior-level set of $\epsilon$-adequate distributions is
\[
\cG_\epsilon(P)=\{Q\in\mathfrak C_{Q_0}:\cB_0(Q)\cap\cB_\epsilon(P)\neq\varnothing\}.
\]
Define the \emph{decision repair radius}
\[
\cR_{P,Q_0}(\epsilon)
=
\inf_{Q\in\cG_\epsilon(P)}\KL(Q\|Q_0).
\]

\begin{proposition}[Repair as contraction onto the acceptable set]
\label{prop:repair-contraction-v2}
For every $\epsilon\ge0$,
\[
\cR_{P,Q_0}(\epsilon)
=
\inf_{a\in\cB_\epsilon(P)}I_{Q_0}(a).
\]
\end{proposition}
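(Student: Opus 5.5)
The plan is to write $\cG_\epsilon(P)$ as a union of action fibers and then split one infimum into two nested infima. Unwinding the definition, $Q\in\cG_\epsilon(P)$ holds if and only if some $a\in\cB_\epsilon(P)$ satisfies $a\in\cB_0(Q)$, that is, $Q\in\cC_a$. Hence
\[
\cG_\epsilon(P)=\bigcup_{a\in\cB_\epsilon(P)}\cC_a .
\]
Both sides are subsets of $\mathfrak C_{Q_0}$, since $\cC_a$ is defined inside the ambient class. Overlapping fibers (ties) cause no trouble, because only membership in the union matters.

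The identity then follows from the general fact that an infimum over a union equals the infimum of the infima over the pieces:
\[
\inf_{Q\in\bigcup_a\cC_a}\KL(Q\|Q_0)=\inf_{a}\inf_{Q\in\cC_a}\KL(Q\|Q_0)=\inf_{a\in\cB_\epsilon(P)}I_{Q_0}(a).
\]
I would prove the two inequalities separately.

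\textbf{($\ge$).} Take any $Q\in\cG_\epsilon(P)$ and pick a witnessing $a\in\cB_\epsilon(P)$ with $Q\in\cC_a$. Then $\KL(Q\|Q_0)\ge I_{Q_0}(a)\ge\inf_{b\in\cB_\epsilon(P)}I_{Q_0}(b)$. Taking the infimum over $Q$ gives $\cR_{P,Q_0}(\epsilon)\ge\inf_{b\in\cB_\epsilon(P)}I_{Q_0}(b)$.

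\textbf{($\le$).} For each $a\in\cB_\epsilon(P)$ we have $\cC_a\subseteq\cG_\epsilon(P)$, so $\cR_{P,Q_0}(\epsilon)\le I_{Q_0}(a)$. Taking the infimum over $a$ gives the reverse inequality.

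The only step needing any care is the edge cases, so I would state the convention $\inf\varnothing=+\infty$ explicitly and check that both arguments survive it.
\begin{itemize}
\item If $\cB_\epsilon(P)$ is empty (for instance, when $R_P^*$ is not attained and $\epsilon=0$), then $\cG_\epsilon(P)$ is empty too, and both sides equal $+\infty$.
\item If some fiber $\cC_a$ is empty, then $I_{Q_0}(a)=+\infty$ and it contributes nothing to either side.
\item Infinite KL values are likewise harmless, because all comparisons take place in $[0,\infty]$.
\end{itemize}
No attainment or regularity is needed anywhere, so the argument is purely set-theoretic.
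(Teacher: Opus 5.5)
Your proof is correct and follows the paper's proof: you write $\cG_\epsilon(P)=\bigcup_{a\in\cB_\epsilon(P)}\cC_a$ and then take the infimum piece by piece (partial minimization) over the union. Your explicit two-inequality argument and your treatment of the edge cases (empty sets under the convention $\inf\varnothing=+\infty$, infinite KL values) supply details the paper leaves implicit.
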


\begin{proof}
Since $\cG_\epsilon(P)=\bigcup_{a\in\cB_\epsilon(P)}\cC_a$, partial minimization over the union gives the result.
\end{proof}

If $Q_0$ is already $\epsilon$-adequate, then $\cR_{P,Q_0}(\epsilon)=0$.  Otherwise the minimizer identifies the cheapest acceptable action to reach, and its I-projection gives the corresponding posterior repair.  As the regret tolerance increases, the acceptable set grows and the repair radius can only decrease.

\subsection{Distance to failure and the common decision boundary}

Suppose the baseline is strictly $\epsilon$-adequate and the Bayes action is unique, with $a_0=\Gamma(Q_0)$ and $\Delta_P(a_0)<\epsilon$.  Strict failure means $\Delta_P(a)>\epsilon$.  Define the \emph{decision robustness radius} or \emph{failure radius} by
\[
\cM_{P,Q_0}(\epsilon)
=
\inf_{a:\Delta_P(a)>\epsilon}I_{Q_0}(a).
\]
Equivalently,
\[
\cM_{P,Q_0}(\epsilon)
=
\inf_{Q\in\mathfrak C_{Q_0}:\Delta_P(\Gamma(Q))>\epsilon}\KL(Q\|Q_0)
\]
when the Bayes action is unique.  By construction,
\[
\KL(Q\|Q_0)<\cM_{P,Q_0}(\epsilon)
\quad\Longrightarrow\quad
\Delta_P(\Gamma(Q))\le\epsilon.
\]
Thus $\cM$ is an information-space margin against decision failure.

Repair and robustness are contractions onto opposite sides of the same target decision boundary.  When $Q_0$ is inadequate, $\cR$ is the information needed to enter the acceptable region.  When $Q_0$ is strictly adequate, $\cM$ is the information needed to leave it.  The target supplies the boundary through $\cB_\epsilon(P)$, while the baseline supplies the cost of reaching different parts of that boundary through $I_{Q_0}(a)$.

The strict inequality in the definition of failure matters in discrete problems.  In regular continuous settings, when the boundary can be approached from the failure side without a jump in information cost, the strict-event infimum agrees with the infimum over the closure $\Delta_P(a)\ge\epsilon$.  The local quadratic theory uses this closed boundary problem.  With nonunique Bayes actions, the same global distinction can be formulated directly through $\cG_\epsilon(P)$ and its complement.

\subsection{Large-deviation interpretation}
\label{subsec:accessibility-large-deviations}

The forward-KL contraction also has a large-deviation interpretation.  Define the measure-level information
\[
\cJ_{Q_0}(Q):=\KL(Q\|Q_0).
\]
Then $I_{Q_0}(a)$ contracts $\cJ_{Q_0}$ onto the candidate-class Bayes-action fiber $\cC_a$, and $\cR$ and $\cM$ further contract the resulting action-level cost.

For the i.i.d. interpretation, let the $n$-sample law be
\[
Q_0^{(n)}:=Q_0^{\otimes n},
\qquad
(\theta_1,\ldots,\theta_n)\sim Q_0^{(n)},
\qquad
\widehat Q_n=\frac1n\sum_{i=1}^n\delta_{\theta_i}.
\]
All probabilities in this subsection are taken under $Q_0^{(n)}$.  Sanov's theorem gives $\cJ_{Q_0}$ as the empirical-measure rate function on its full probability-measure state space \cite{dembozeitouni1998}.  In settings where Bayes actions are well defined for empirical measures, let
\[
\widetilde{\cG}_\epsilon(P)
=
\bigl\{\mu\in\mathscr P(\Theta):
\cB_0(\mu)\cap\cB_\epsilon(P)\ne\varnothing\bigr\}
\]
be a measurable decision event in that state space.  This event differs from the candidate-class-restricted set $\cG_\epsilon(P)$ defined above.  When $\widetilde{\cG}_\epsilon(P)$ is a $\cJ_{Q_0}$-continuity set, Sanov yields
\[
-\lim_{n\to\infty}\frac1n
\log\Pr\{\widehat Q_n\in\widetilde{\cG}_\epsilon(P)\}
=
\inf_{\mu\in\widetilde{\cG}_\epsilon(P)}\cJ_{Q_0}(\mu).
\]
The exponent equals $\cR_{P,Q_0}(\epsilon)$ only if restricting this rate infimum to $\mathfrak C_{Q_0}$ does not change its value.  An analogous statement applies to strict failure.  Appendix~\ref{app:regularity-sanov} records these qualifications; without event continuity there are only the usual large-deviation upper and lower bounds.  For dependent samples, the corresponding rate function need not be $\cJ_{Q_0}$ \cite{dembozeitouni1998}.

\subsection{The running examples revisited}

The three examples from Section~\ref{sec:decision-adequacy} now acquire baseline-dependent information costs.

\paragraph{Gaussian mean.}
Let $Q_0=N(m_0,\sigma_0^2)$ and retain squared-error loss for a target with mean $m_P$.  The fixed-action contraction is
\[
I_{Q_0}(a)=\frac{(a-m_0)^2}{2\sigma_0^2}.
\]
Since the target accepts $[m_P-\sqrt\epsilon,m_P+\sqrt\epsilon]$,
\[
\cR_{P,Q_0}(\epsilon)
=
\frac{\bigl(|m_0-m_P|-\sqrt\epsilon\bigr)_+^2}{2\sigma_0^2}.
\]
If $|m_0-m_P|<\sqrt\epsilon$, continuity gives
\[
\cM_{P,Q_0}(\epsilon)
=
\frac{\bigl(\sqrt\epsilon-|m_0-m_P|\bigr)^2}{2\sigma_0^2}.
\]
The target determines the acceptable interval; the baseline variance determines how expensive its boundary is to reach.

\paragraph{Quantiles.}
For pinball loss at level $\tau\in(0,1)$, suppose $Q_0$ has continuous CDF $F_0$ and $\E_{Q_0}|Y|<\infty$.  The general Bayes condition is $F_Q(a^-)\le\tau\le F_Q(a)$.  Every finite-KL competitor satisfies $Q\ll Q_0$ and is therefore atomless, so on the finite-information fiber this reduces to $F_Q(a)=\tau$.  Consequently,
\begin{equation}
\label{eq:quantile-contracted-info}
I_{Q_0}(a)
=
\operatorname{kl}\!\left(\tau\,\middle\|\,F_0(a)\right),
\end{equation}
where
\[
\operatorname{kl}(p\|q)
=p\log\frac pq+(1-p)\log\frac{1-p}{1-q}
\]
denotes Bernoulli KL, with the usual extended-value convention at $q\in\{0,1\}$.  Appendix~\ref{app:contracted-info-v2} gives the two-set KL decomposition and the minimizing tilt.  The target determines which quantiles are acceptable, while $F_0$ determines how much information it costs to move to either boundary.

\paragraph{Binary classification.}
Suppose $\eta_P>1/2$ under zero-one loss and $0\le\epsilon<2\eta_P-1$.  Restricting to Bernoulli predictive laws with baseline probability $\eta_0$, the distance to the decision boundary is $\operatorname{kl}(1/2\,\|\,\eta_0)$.  It is a repair radius when $\eta_0<1/2$ and a strict-failure radius when $\eta_0>1/2$.  If $\epsilon\ge2\eta_P-1$, both actions are acceptable and strict failure is impossible, so $\cM=+\infty$.  This discrete example motivates retaining strict failure in the global definition even though the regular local problem uses the closed boundary.

%% file: local-geometry.tex
\section{Local Decision--Information Geometry}
\label{sec:local-geometry}

Near a regular decision, the global target and baseline constructions admit quadratic local descriptions.  The target contributes a regret curvature $G$ and the baseline contributes an information metric $J_I$.  Their interaction determines the local decision geometry.

Throughout this section the effective action space is an open subset of $\mathbb R^d$, the relevant Bayes actions are unique, and all local matrices are understood on effective action coordinates.  Appendix~\ref{app:regularity-v2} gives the precise regularity conditions.

\subsection{Target-side geometry: the local cost of decision error}

Let $P$ be a fixed target posterior with unique Bayes action $a_P$.  Under (T1)--(T4) of Appendix~\ref{app:regularity-v2}, $G:=\nabla_a^2R_P(a_P)\succ0$.  Because $\nabla_aR_P(a_P)=0$,
\begin{equation}
\label{eq:local-regret-expansion}
\Delta_P(a_P+h)
=
\frac12 h^\top Gh+o(\|h\|^2).
\end{equation}
The matrix $G$ measures the local regret consequence of action error: directions with larger curvature accumulate regret more quickly.

The same expansion controls the small-tolerance acceptable set.  The isolation assumption (T4) rules out remote nearly optimal actions and gives
\[
\frac{\cB_\epsilon(P)-a_P}{\sqrt{\epsilon}}
\longrightarrow
\left\{u:\frac12u^\top Gu\le1\right\}
\]
in Hausdorff distance as $\epsilon\downarrow0$.  Thus, near a regular isolated optimum, the regret filtration is locally a family of shrinking ellipsoids.  The matrix $G$ depends on the target posterior and the loss, so two targets with the same Bayes action can still assign different consequences to nearby action errors.

\subsection{Baseline-side geometry: the information cost of moving the action}

Fix a baseline approximation $Q_0$ with Bayes action $a_0$.  Under (I1)--(I5) of Appendix~\ref{app:regularity-v2}, actions near $a_0$ are identified by an equation
\[
\E_Q\psi_a=0.
\]
Here $\psi_a$ is an identification function for the Bayes action: among the finite-information competitors relevant to the contraction, the equation holds exactly when $a$ is Bayes-optimal.  For smooth strictly convex risks one can often take $\psi_a(\theta)=\nabla_aL(\theta,a)$; regular quantiles provide a useful nonsmooth example.

Define
\[
V=\operatorname{Cov}_{Q_0}(\psi_{a_0}),
\qquad
H=D_a[\E_{Q_0}\psi_a]_{a=a_0}.
\]
The local I-projection calculation in Appendix~\ref{app:contracted-info-v2} gives
\[
I_{Q_0}(a_0+h)
=
\frac12h^\top J_Ih+o(\|h\|^2),
\qquad
J_I=H^\top V^{-1}H.
\]
Thus $J_I$ is the local information cost of inducing an action displacement.  Algebraically it is the usual Godambe/optimal-estimating-function information \cite{godambe1960optimum,godambeheyde1987quasi}.

The same metric appears from pathwise differentiation.  Let $\mathcal H=L_0^2(Q_0)$ be the ambient score space and consider a regular path with $dQ_t/dQ_0=1+t s+o_{L^2}(t)$.  If $\Gamma(Q_t)=a_0+tB[s]+o(t)$, differentiating the identifying equation gives
\[
B[s]=-H^{-1}\E_{Q_0}[s\psi_{a_0}].
\]
Hence $BB^*=H^{-1}VH^{-\top}$ and
\[
J_I=(BB^*)^{-1}.
\]
This is the semiparametric covariance geometry in inverse form \cite{vanderVaart1998}: score energy is converted into a prescribed first-order displacement of the decision functional.

\paragraph{Geometric transport through the Bayes-action map.}
The derivative $B$ is the map through which the local posterior and decision geometries are related.  Posterior score space carries the Fisher inner product
\[
\langle s_1,s_2\rangle_{Q_0}=\E_{Q_0}[s_1s_2].
\]
Sending a score perturbation $s$ to the action displacement $h=B[s]$ and minimizing over all scores with the same image induces the quotient metric
\begin{equation}
\label{eq:quotient-information-metric}
h^\top J_Ih
=
\inf_{s:\,B[s]=h}
\norm{s}_{L^2(Q_0)}^2,
\qquad h\in E.
\end{equation}
Thus $J_I$ is the information metric induced on effective action space by the ambient Fisher geometry.  One can regard this as a pushforward only in the quotient sense, because $B$ is generally many-to-one.

Conversely, when the target regret curvature $G$ is expressed in the same local action coordinates, it pulls back through $B$ to the posterior score space:
\begin{equation}
\label{eq:pullback-regret-metric}
g_{\mathrm{dec}}(s_1,s_2)
=
B[s_1]^\top G B[s_2]
=
\langle s_1,B^*GB[s_2]\rangle_{Q_0}.
\end{equation}
The operator $B^*GB$ is positive semidefinite and vanishes on $\ker B$.  Posterior perturbations that do not move the Bayes action to first order are therefore null directions of the pulled-back decision geometry.  Passing to $E=\operatorname{ran}B$ quotients out exactly these invisible directions and yields the nondegenerate pair $(G,J_I)$ used below.  Appendix~\ref{app:regularity-pathwise} gives primitive identification-function conditions and a checkable smooth-loss sufficient condition for this transport.

\subsection{Minimum-information lifts}

Define the effective action space locally as
\[
E:=\operatorname{ran}B.
\]
Then $B:\mathcal H\to E$ is surjective by construction.  For a desired first-order displacement $h\in E$, consider
\[
\inf_{s\in\mathcal H:\,B[s]=h}
\frac12\|s\|_{L^2(Q_0)}^2.
\]
The unique minimum-energy, or horizontal, lift of $h$ is
\begin{equation}
\label{eq:minimum-information-score}
s_h^*=B^*J_Ih,
\end{equation}
with energy $\tfrac12h^\top J_Ih$.  The lift is orthogonal to $\ker B$, so any other score producing the same action displacement has larger Fisher energy.  Under the equality-fiber representation,
\[
s_h^*(\theta)=-\psi_{a_0}(\theta)^\top V^{-1}Hh.
\]
With the additional $L^2$ differentiability condition in Appendix~\ref{app:contracted-info-v2}, $s_h^*$ is also the first-order expansion of the exact exponential I-projection.

This distinction becomes important for restricted approximation families.  A family can realize $h$ while missing the ambient minimum-energy lift $s_h^*$, thereby paying excess Fisher cost, or it can fail to realize $h$ at first order.  Section~\ref{sec:restricted-families} separates these two phenomena.

\subsection{Repair and failure share the same local problem}

To combine target regret and baseline information, let $t\downarrow0$ index a local regime in which the target and baseline Bayes actions approach one another.  Let $P_t$ have Bayes action $a_t$ and suppose
\[
\frac{a_t-a_0}{t}\to\delta.
\]
The parameter $t$ is a localization parameter rather than a sample-size index.  The target--baseline action discrepancy is $O(t)$; regular regret and KL information are quadratic in local displacement, so their natural scale is $O(t^2)$.

Under (M1)--(M4) of Appendix~\ref{app:regularity-moving-target}, uniformly for $h$ in compact sets,
\begin{align*}
I_{Q_0}(a_0+th)
&=
\frac{t^2}{2}h^\top J_Ih+o(t^2),\\
\Delta_{P_t}(a_0+th)
&=
\frac{t^2}{2}(h-\delta)^\top G(h-\delta)+o(t^2).
\end{align*}
Write the local regret tolerance as $\epsilon_t=t^2c$ and define
\[
c_0=\frac12\delta^\top G\delta,
\]
the unrepaired local regret at the baseline.

\begin{theorem}[Local repair and robustness]
\label{thm:local-repair-robustness-v2}
Under (M1)--(M4) of Appendix~\ref{app:regularity-moving-target}, for each fixed $c\ge0$,
\[
\frac{\cR_{P_t,Q_0}(t^2c)}{t^2}
\longrightarrow
r(c;\delta)
:=
\min_h
\left\{
\frac12h^\top J_Ih:
\frac12(h-\delta)^\top G(h-\delta)\le c
\right\}.
\]
For each fixed $c>c_0$, so that the baseline is locally strictly adequate,
\[
\frac{\cM_{P_t,Q_0}(t^2c)}{t^2}
\longrightarrow
m(c;\delta)
:=
\min_h
\left\{
\frac12h^\top J_Ih:
\frac12(h-\delta)^\top G(h-\delta)\ge c
\right\}.
\]
When the limiting minimizer is unique, information-minimizing repaired actions converge after rescaling to the repair minimizer, and every asymptotically minimizing sequence of strict-failure actions converges after rescaling to the failure minimizer.
\end{theorem}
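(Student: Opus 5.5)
The plan is to reduce both statements to a finite-dimensional $\Gamma$-convergence (epi-convergence) argument on rescaled action space. By Proposition~\ref{prop:repair-contraction-v2}, repair is an infimum of $I_{Q_0}$ over $\cB_{\epsilon_t}(P_t)$, and by definition failure is an infimum of $I_{Q_0}$ over $\{a:\Delta_{P_t}(a)>\epsilon_t\}$. Substitute $a=a_0+th$ and set
\[
F_t(h)=t^{-2}I_{Q_0}(a_0+th),
\qquad
D_t(h)=t^{-2}\Delta_{P_t}(a_0+th).
\]
Then
\[
t^{-2}\cR_{P_t,Q_0}(t^2c)=\inf\{F_t(h):D_t(h)\le c\},
\qquad
t^{-2}\cM_{P_t,Q_0}(t^2c)=\inf\{F_t(h):D_t(h)>c\}.
\]
The displayed expansions give $F_t\to F(h)=\tfrac12h^\top J_Ih$ and $D_t\to D(h)=\tfrac12(h-\delta)^\top G(h-\delta)$, uniformly on compact sets. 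The limit problems are a positive definite quadratic minimized over an ellipsoid, respectively over the complement of its interior. Minimizers therefore exist, and since $G,J_I\succ0$ the two values $r$ and $m$ are finite.

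\textbf{Upper bounds.} For repair with $c>0$, take a limit minimizer $h^*$. Move it slightly toward $\delta$ to get $h_\eta=(1-\eta)h^*+\eta\delta$, which satisfies $D(h_\eta)<c$. By uniform convergence, $D_t(h_\eta)\le c$ for small $t$, hence $\limsup_t t^{-2}\cR\le F(h_\eta)$, and letting $\eta\to0$ gives $\limsup\le r(c)$. For $c=0$ the constraint is $h=\delta$ in the limit. Here I would instead use the true target action $a_t$ itself, i.e.\ $h=(a_t-a_0)/t\to\delta$, which is always repaired exactly; continuity of $F$ and uniformity then give $\limsup\le F(\delta)=r(0)$. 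For failure, the minimizer lies on $\{D=c\}$ because $c>c_0=D(0)$, and scaling it outward by $(1+\eta)$ gives $D>c$ strictly. The same argument applies, using that $\{D\ge c\}$ is the closure of $\{D>c\}$ since $D$ is a nondegenerate quadratic.

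\textbf{Lower bounds, the main obstacle.} The expansions only hold on compact $h$-sets, so I must exclude near-optimal sequences with $\|h_t\|\to\infty$, or with actions outside the local chart. For repair this is easy: the constraint $D_t(h)\le c$ together with the isolation/coercivity part of (M1)--(M4) (the moving-target analogue of (T4), giving $\cB_{t^2c}(P_t)\subseteq a_t+O(t)$ balls) confines $h$ to a compact set. For failure the feasible set is unbounded, so I instead need a coercivity bound on the information side. I would take it from (M1)--(M4) in the form $\liminf_t\inf_{\|h\|\ge K}F_t(h)\ge \lambda_{\min}(J_I)K^2/4$ together with a global lower bound on $I_{Q_0}$ away from $a_0$. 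Choosing $K$ so that this exceeds $m(c)+1$ lets me restrict to $\|h\|\le K$. On that compact set, uniform convergence gives $\liminf\inf\ge\inf\{F(h):D(h)\ge c-\eta,\|h\|\le K\}$, and letting $\eta\to0$ yields $r$ and $m$ by continuity of the constrained minima in $c$. This continuity holds because the constraint sets vary continuously in Hausdorff sense away from degenerate levels, which is where $c>c_0$ is used for failure. If (M1)--(M4) does not literally provide this coercivity, this is the step where I would have to add it as a hypothesis.

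\textbf{Minimizers.} Let $h_t$ be asymptotically minimizing, i.e.\ feasible for the $t$-problem with $F_t(h_t)-\inf\to0$. The compactness step makes $(h_t)$ bounded. Any limit point $\bar h$ satisfies the limit constraint, since $D_t\to D$ uniformly, and has $F(\bar h)\le$ the optimal value, by the lower/upper bounds. Thus $\bar h$ is a limit minimizer, and uniqueness forces convergence of the whole sequence. For repair, exact information-minimizing actions, when they exist, are a special case of such sequences.
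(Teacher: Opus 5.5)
Your proposal is correct and follows essentially the paper's argument. The ingredients match: uniform quadratic convergence on compacts; localization of the rescaled repair sets by the target part of (M4); localization of failure near-minimizers by the information part of (M4); and an outward boundary perturbation to pass from the strict failure event to the closed limiting constraint. The information part of (M4), namely $I_{Q_0}(a)\ge c_I\|a-a_0\|^2$ on $U$ plus the gap $\kappa_I$ off $U$, supplies exactly the coercivity you were unsure about (with $c_I$ in place of $\lambda_{\min}(J_I)/4$), so no extra hypothesis is needed.

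The only real variation is in the repair half. You argue directly: perturb toward $\delta$ for the upper bound, relax to $D\le c+\eta$ for the lower bound, and treat $c=0$ separately via $h=(a_t-a_0)/t$. The paper instead invokes its Hausdorff-convergence stability lemma for constrained minima, and the two arguments are equivalent here.
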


Locally, repair and robustness are therefore the same minimum-information problem on opposite sides of a target regret ellipsoid.  Repair finds the cheapest point inside the acceptable region; robustness finds the cheapest point on or beyond its boundary.  The global failure event remains strict, but in the regular regime the information cost is continuous at the boundary, so the limiting problem uses the closed constraint.

If $c\ge c_0$, the origin is already acceptable and $r(c;\delta)=0$.  At exact decision matching, $r(0;\delta)=\tfrac12\delta^\top J_I\delta$.  For $c>c_0$, the robustness minimizer lies on the boundary of the acceptable ellipsoid.

\subsection{Generalized decision--information modes}

Because $J_I\succ0$, information whitening transforms the pair $(G,J_I)$ into the symmetric positive-definite matrix
\[
A=J_I^{-1/2}GJ_I^{-1/2}.
\]
Its orthonormal eigenbasis induces $J_I$-orthonormal generalized eigenvectors $v_i$ satisfying
\[
Gv_i=\gamma_iJ_Iv_i,
\qquad
\gamma_1\ge\cdots\ge\gamma_d>0,
\qquad
v_i^\top J_Iv_j=\delta_{ij}.
\]
The Rayleigh quotient
\[
\frac{v^\top Gv}{v^\top J_Iv}
=
\frac{\text{local target-regret curvature in direction }v}
{\text{local posterior-information cost in direction }v}
\]
shows that $\gamma_i$ measures regret consequence per unit information.  Large generalized eigenvalues identify directions that become consequential after comparatively little posterior movement.

Write $\delta=\sum_i d_iv_i$ and $h=\sum_i x_iv_i$.  Then
\[
\frac12h^\top J_Ih
=
\frac12\sum_i x_i^2,
\qquad
\frac12(h-\delta)^\top G(h-\delta)
=
\frac12\sum_i\gamma_i(x_i-d_i)^2.
\]
The generalized modes therefore decouple the local geometry into scalar channels: information cost is Euclidean in the $x_i$, while the target regret contribution of channel $i$ is weighted by $\gamma_i$.  This is the basis for ranking directions and for the effective-dimension calculation in Section~\ref{sec:effective-dimension}.

For repair with $0<c<c_0$, the active-constraint solution has
\[
x_i^{\mathrm{rep}}
=
\frac{\lambda\gamma_i}{1+\lambda\gamma_i}d_i
\]
for a unique $\lambda>0$ chosen to satisfy the regret boundary.  The corresponding failure KKT system and its trust-region hard case are given in Appendix~\ref{app:local-proofs-v2}.  The centered case $\delta=0$ makes the leading mode especially transparent:
\[
m(c;0)
=
\min_h
\left\{
\frac12h^\top J_Ih:
\frac12h^\top Gh\ge c
\right\}
=
\frac{c}{\gamma_1}.
\]
Any leading generalized mode, scaled as $h^*=\sqrt{2c/\gamma_1}\,v_1$, is a cheapest local failure direction.

\subsection{What the pair \texorpdfstring{$(G,J_I)$}{(G, JI)} tells us}

The pair separates two questions that a single posterior discrepancy cannot.  The target metric $G$ measures the consequence of action error; the baseline metric $J_I$ measures how much information is required to create that error.  Their generalized eigenvectors identify the directions that are most consequential relative to their accessibility.

Section~\ref{sec:effective-dimension} next asks an oracle representation question: if only a few decision directions can be retained, which subspace gives the best possible guarantee?  Section~\ref{sec:restricted-families} then asks whether a chosen approximation family realizes that oracle geometry and at what Fisher cost.

%% file: effective-dimension.tex
\section{How Many Decision Directions Matter?}
\label{sec:effective-dimension}

We now pose an oracle representation problem in action space.  Before choosing a particular variational family, suppose we may retain any $r$-dimensional subspace of local decision displacements.  Which subspace gives the smallest worst-case regret over perturbations of a specified information size?  Section~\ref{sec:restricted-families} will then ask whether a concrete approximation family realizes that oracle subspace.

\subsection{Worst-case representation over an information ball}

Work in the regular effective action space $E\simeq\mathbb R^d$ with $J_I\succ0$ and $G\succ0$.  For an information budget $\rho>0$, define
\[
\mathcal H_\rho
=
\left\{h\in E:\frac12h^\top J_Ih\le\rho\right\}.
\]
This is the set of local decision displacements that can be induced from the baseline at information cost at most $\rho$ to second order.

Now retain only an $r$-dimensional subspace $S\subset E$.  If the true displacement is $h$, the best approximation inside $S$ leaves residual regret
\[
\inf_{s\in S}\frac12(h-s)^\top G(h-s).
\]
Define
\[
\mathcal E_\rho(S)
=
\sup_{h\in\mathcal H_\rho}
\inf_{s\in S}
\frac12(h-s)^\top G(h-s),
\qquad
\mathcal E_r^*(\rho)
=
\inf_{\dim S=r}\mathcal E_\rho(S).
\]
This is a finite-dimensional Kolmogorov-width problem for an ellipsoid \cite{pinkus1985nwidths}.

Let $Gv_i=\gamma_iJ_Iv_i$ with $\gamma_1\ge\gamma_2\ge\cdots\ge\gamma_d>0$ and normalization $v_i^\top J_Iv_j=\delta_{ij}$.  Equivalently, if $A=J_I^{-1/2}GJ_I^{-1/2}$, then the $\gamma_i$ are the eigenvalues of $A$ and $u_i=J_I^{1/2}v_i$ are an orthonormal eigenbasis.

\begin{theorem}[Optimal information-bounded decision representation]
\label{thm:effective-dimension}
With the convention $\gamma_{d+1}=0$, for every $r\in\{0,\ldots,d\}$,
\[
\mathcal E_r^*(\rho)=\rho\gamma_{r+1}.
\]
An optimal retained subspace is $S_r^*=\operatorname{span}\{v_1,\ldots,v_r\}$, with $S_d^*=E$.  The smallest rank that guarantees omitted regret at most $\epsilon$ is
\[
 d_{\mathrm{eff}}(\epsilon;\rho)
 =
 \min\{r:\rho\gamma_{r+1}\le\epsilon\}
 =
 \#\{i:\rho\gamma_i>\epsilon\}.
\]
\end{theorem}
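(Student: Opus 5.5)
The plan is to reduce the problem to a classical Kolmogorov-width computation for an ellipsoid in Euclidean space by information whitening. Set $u=J_I^{1/2}h$ and $w=J_I^{1/2}s$, and let $A=J_I^{-1/2}GJ_I^{-1/2}$. The information ball becomes the Euclidean ball $\{u:\|u\|^2\le2\rho\}$, and a subspace $S$ becomes $W=J_I^{1/2}S$ of the same dimension. The residual regret becomes $\frac12(u-w)^\top A(u-w)$. Passing once more to $A^{1/2}$-coordinates, $z=A^{1/2}u$, the inner infimum over $w\in W$ is $\frac12\operatorname{dist}(z,A^{1/2}W)^2$ in the Euclidean norm. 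Here $z$ ranges over the ellipsoid $A^{1/2}\{\|u\|^2\le 2\rho\}$, whose semi-axes are $\sqrt{2\rho\gamma_i}$ along the $u_i$. Thus
\[
\mathcal E_\rho(S)=\tfrac12\sup_{\|u\|^2\le2\rho}\|(I-\Pi_{W'})A^{1/2}u\|^2=\rho\,\|(I-\Pi_{W'})A^{1/2}\|_{\mathrm{op}}^2,
\]
where $W'=A^{1/2}W$ is an arbitrary $r$-dimensional subspace, since $A^{1/2}$ is invertible and $S\mapsto W'$ is a bijection on $r$-dimensional subspaces.

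For the upper bound, I take $W'=\operatorname{span}\{u_1,\dots,u_r\}$, which corresponds to $S=S_r^*$ because $A^{1/2}u_i=\sqrt{\gamma_i}u_i$ and $J_I^{-1/2}u_i=v_i$. Then $(I-\Pi_{W'})A^{1/2}$ acts diagonally with entries $\sqrt{\gamma_i}$ for $i>r$, so its squared operator norm is $\gamma_{r+1}$. This gives $\mathcal E_\rho(S_r^*)=\rho\gamma_{r+1}$, and the convention $\gamma_{d+1}=0$ handles $r=d$.

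For the lower bound, which I expect to be the only genuinely nontrivial step, I use a dimension-counting argument. Given any $r$-dimensional $W'$, the subspace $U=\operatorname{span}\{u_1,\dots,u_{r+1}\}$ has dimension $r+1$. So there is a unit vector $u\in U$ with $A^{1/2}u\perp W'$: the space $A^{1/2}U=U$ has dimension $r+1$ and must meet $W'^{\perp}$ nontrivially. For this $u$,
\[
\|(I-\Pi_{W'})A^{1/2}u\|^2=\|A^{1/2}u\|^2=u^\top Au\ge\gamma_{r+1}.
\]
Scaling $u$ to norm $\sqrt{2\rho}$ then gives $\mathcal E_\rho(S)\ge\rho\gamma_{r+1}$. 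Equivalently, one can invoke Courant--Fischer, or the Eckart--Young bound $\|A^{1/2}-F\|_{\mathrm{op}}^2\ge\gamma_{r+1}$ for rank-$r$ operators $F$, applied to $F=\Pi_{W'}A^{1/2}$. Combining the two bounds yields $\mathcal E_r^*(\rho)=\rho\gamma_{r+1}$, attained at $S_r^*$.

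Finally, the effective-dimension formula follows by monotonicity. The map $r\mapsto\rho\gamma_{r+1}$ is nonincreasing and vanishes at $r=d$, so the minimal $r$ with $\rho\gamma_{r+1}\le\epsilon$ exists. That minimal $r$ is exactly the number of indices $i$ with $\rho\gamma_i>\epsilon$, because the $\gamma_i$ are sorted: the condition fails precisely while $\gamma_{r+1}$ still belongs to the set $\{i:\rho\gamma_i>\epsilon\}$. The only care needed is bookkeeping of ties among the $\gamma_i$, which affects uniqueness of $S_r^*$ but not the value.
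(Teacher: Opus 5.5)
Your proposal is correct and follows the paper's proof: you use the same information whitening $x=J_I^{1/2}h$, the same map $z=A^{1/2}x$ onto an ellipsoid with semiaxes $\sqrt{2\rho\gamma_i}$, and the same bijection between $r$-dimensional subspaces. The only difference is that the paper cites the classical fact that the Kolmogorov $r$-width of an ellipsoid equals its $(r+1)$st semiaxis, whereas you prove that fact directly. Your upper bound is the diagonal computation, and your lower bound uses $\operatorname{span}\{u_1,\dots,u_{r+1}\}\cap (W')^{\perp}\neq\{0\}$; the case $r=d$ is trivial since the residual is nonnegative.
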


\begin{proof}
Whiten action space by the information metric.  Let $x=J_I^{1/2}h$ and $L=J_I^{1/2}S$.  Then $\mathcal H_\rho$ maps to $\{x:\|x\|^2\le2\rho\}$ and
\[
\inf_{s\in S}\frac12(h-s)^\top G(h-s)
=
\inf_{y\in L}\frac12(x-y)^\top A(x-y).
\]
Because $A\succ0$, the map $z=A^{1/2}x$ is invertible.  The information ball is mapped to the ellipsoid
\[
\mathcal K_\rho
=
\left\{A^{1/2}x:\|x\|^2\le2\rho\right\},
\]
whose Euclidean principal semiaxes are $\sqrt{2\rho\gamma_1},\ldots,\sqrt{2\rho\gamma_d}$.  As $L$ ranges over $r$-dimensional subspaces, $A^{1/2}L$ ranges over all $r$-dimensional Euclidean subspaces.  For $r<d$, the Kolmogorov $r$-width of this ellipsoid is its $(r+1)$st semiaxis.  Squaring and multiplying by $1/2$ gives $\mathcal E_r^*(\rho)=\rho\gamma_{r+1}$, and the optimal subspace maps back to $\operatorname{span}\{v_1,\ldots,v_r\}$.  The case $r=d$ gives zero residual error.  The formula for $d_{\mathrm{eff}}$ follows.
\end{proof}

The retention rule is immediate: mode $v_i$ must be kept for a uniform $(\epsilon,\rho)$ guarantee exactly when $\rho\gamma_i>\epsilon$.  The resulting effective dimension is therefore a profile indexed by both the allowed regret and the information scale.  It is a property of the local decision problem, rather than of posterior dimension or parameter dimension alone.

\subsection{Tolerance and information scale}

The map $\epsilon\mapsto d_{\mathrm{eff}}(\epsilon;\rho)$ is a step function with breakpoints $\epsilon=\rho\gamma_i$.  Increasing the allowed regret can only reduce the effective dimension, while increasing the perturbation radius can only increase it.  At zero tolerance every effective local decision direction must be retained, whereas at positive tolerance some genuine directions of posterior-induced decision variation can be omitted.

The construction is invariant to invertible linear reparameterization of the action: $G$ and $J_I$ transform by congruence and the generalized eigenvalues do not change.  The profile therefore belongs to the decision geometry, not to a chosen coordinate system.

\subsection{Structural, realized, and ensemble importance}
\label{subsec:ensemble-dimension}

The width calculation is deliberately worst-case.  It treats every displacement inside $\mathcal H_\rho$ as possible.  A realized target mismatch can emphasize a much smaller subset of the structural modes.  If
\[
\delta=\sum_{i=1}^d c_i v_i,
\]
then
\[
\frac12\delta^\top G\delta
=
\frac12\sum_{i=1}^d\gamma_i c_i^2.
\]
The quantity $\tfrac12\gamma_i c_i^2$ is the realized regret contribution of mode $i$.  A mode with a large structural eigenvalue can contribute little to one realized target mismatch if its coefficient $c_i$ is small.

For a single fixed $\delta$, arbitrary subspace selection makes a dimension notion nearly trivial because $\operatorname{span}\{\delta\}$ represents it exactly.  A nontrivial representation problem appears when one subspace must work over a class or distribution of plausible mismatches.  Let $\Delta$ be a random local target displacement with finite second moment and define
\[
\overline{\mathcal E}(S)
=
\E\left[
\inf_{s\in S}
\frac12(\Delta-s)^\top G(\Delta-s)
\right],
\qquad
\overline{\mathcal E}_r^*
=
\inf_{\dim S=r}\overline{\mathcal E}(S).
\]
Let $M_\Delta=\E[\Delta\Delta^\top]$, define $K=G^{1/2}M_\Delta G^{1/2}$, and write its eigenvalues as $\zeta_1\ge\cdots\ge\zeta_d\ge0$.

\begin{proposition}[Target-ensemble representation]
\label{prop:ensemble-dimension}
For every $r\in\{0,\ldots,d\}$,
\[
\overline{\mathcal E}_r^*
=
\frac12\sum_{i>r}\zeta_i.
\]
Consequently, an average-case effective dimension at tolerance $\epsilon$ is
\[
 d_{\mathrm{ens}}(\epsilon)
=
\min\left\{
 r:
 \frac12\sum_{i>r}\zeta_i\le\epsilon
\right\}.
\]
\end{proposition}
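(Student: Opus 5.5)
The plan is to reduce the ensemble problem to a Euclidean principal-component problem by whitening with $G$. I will write $z=G^{1/2}\Delta$ and $W=G^{1/2}S$. Since $G\succ0$, the map $S\mapsto G^{1/2}S$ is a bijection on $r$-dimensional subspaces of $E$. For each fixed realization,
\[
\inf_{s\in S}\tfrac12(\Delta-s)^\top G(\Delta-s)=\tfrac12\inf_{w\in W}\|z-w\|^2=\tfrac12\|(I-\Pi_W)z\|^2,
\]
where $\Pi_W$ is the Euclidean orthogonal projector onto $W$. The infimum is attained by the $G$-orthogonal projection of $\Delta$ onto $S$, which is a measurable linear map of $\Delta$, so taking expectations raises no measurability issue.

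Next I compute the expectation. Using $\|(I-\Pi_W)z\|^2=\operatorname{tr}\{(I-\Pi_W)zz^\top\}$, linearity, and $\E[zz^\top]=G^{1/2}M_\Delta G^{1/2}=K$, which is finite by the second-moment assumption,
\[
\overline{\mathcal E}(S)=\tfrac12\bigl(\operatorname{tr}K-\operatorname{tr}(\Pi_WK)\bigr).
\]
Minimizing $\overline{\mathcal E}$ over $\dim S=r$ is therefore equivalent to maximizing $\operatorname{tr}(\Pi_WK)$ over rank-$r$ orthogonal projectors. By Ky Fan's maximum principle, equivalently the Courant--Fischer characterization, this maximum equals $\sum_{i\le r}\zeta_i$. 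It is attained by $W$ spanned by the top $r$ eigenvectors of $K$, which corresponds to $S=G^{-1/2}W$.

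Substituting gives $\overline{\mathcal E}_r^*=\tfrac12\sum_{i>r}\zeta_i$, with the cases $r=0$ (value $\tfrac12\operatorname{tr}K$) and $r=d$ (value $0$) included automatically. The formula for $d_{\mathrm{ens}}(\epsilon)$ then follows directly from the definition. The minimum is well defined because $r\mapsto\tfrac12\sum_{i>r}\zeta_i$ is nonincreasing and vanishes at $r=d$, so the set of admissible ranks is nonempty.

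The only nonroutine ingredient is the Ky Fan trace inequality $\operatorname{tr}(\Pi K)\le\sum_{i\le r}\zeta_i$. I plan to cite it rather than reprove it. If a self-contained argument is needed, write $\operatorname{tr}(\Pi K)=\sum_i\zeta_i p_i$ with $p_i=u_i^\top\Pi u_i\in[0,1]$ and $\sum_ip_i=r$; this linear functional is maximized by placing $p_i=1$ on the $r$ largest $\zeta_i$.
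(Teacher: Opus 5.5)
Your proposal is correct and follows the paper's proof. The paper also whitens by $G^{1/2}$ to reduce the problem to a Euclidean rank-$r$ principal-component problem for $K=\E[zz^\top]$. The only difference is that you write out the trace identity and the Ky Fan step (with a self-contained argument), whereas the paper simply cites the standard PCA/POD residual formula.
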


\begin{proof}
Let $Y=G^{1/2}\Delta$ and $L=G^{1/2}S$.  Then $\overline{\mathcal E}(S)=\tfrac12\E[\operatorname{dist}^2(Y,L)]$.  The rank-$r$ minimizer is the principal-component, or proper-orthogonal-decomposition, subspace of the second-moment matrix $K=\E[YY^\top]$, and the minimum residual mean-square error is the sum of the omitted eigenvalues \cite{jolliffe2002pca}.
\end{proof}

In information-whitened coordinates, let $Z=J_I^{1/2}\Delta$, $\Omega=\E[ZZ^\top]$, and $A=J_I^{-1/2}GJ_I^{-1/2}$.  The nonzero eigenvalues governing the ensemble problem are those of $A^{1/2}\Omega A^{1/2}$.  Average-case complexity therefore depends on how costly an error is through $G$, how easy it is to induce through $J_I$, and how often the target ensemble points in that direction through $\Omega$.

If $\Omega$ is diagonal in the generalized decision--information basis and $\Delta=\sum_i C_i v_i$ with $\E[C_iC_j]=0$ for $i\ne j$, then the ensemble eigenvalue multiset is $\{\gamma_i\E[C_i^2]:i=1,\ldots,d\}$, reordered from largest to smallest.  The average contribution attached to mode $i$ is $\tfrac12\gamma_i\E[C_i^2]$, and this ordering need not agree with the worst-case ordering of the $\gamma_i$.

If the target ensemble is supported inside $\mathcal H_\rho$, then $\overline{\mathcal E}_r^*\le\mathcal E_r^*(\rho)=\rho\gamma_{r+1}$ for every $r$, and hence $d_{\mathrm{ens}}(\epsilon)\le d_{\mathrm{eff}}(\epsilon;\rho)$.  A uniform guarantee is at least as demanding as an average one over a specified ensemble.

\subsection{Connection with goal-oriented linear--Gaussian reduction}
\label{subsec:spantini-special-case}

The goal-oriented linear--Gaussian theory of Spantini, Cui, Willcox, Tenorio, and Marzouk \cite{spantini2017goal} is recovered directly from $(G,J_I)$ in the corresponding Gaussian decision problem.  In this special case we take the prior QoI law as the baseline distribution from which information cost is measured.

Consider a linear--Gaussian inverse problem with a linear quantity of interest $Z$.  Let the baseline QoI law be the prior $Q_0=N(0,\Gamma_Z)$ and, after observing data $Y=y$, let the target QoI posterior be $P_y=N(\mu_y,\Gamma_{Z\mid Y})$.  Use posterior-precision-weighted squared loss
\[
L(z,a)=(a-z)^\top\Gamma_{Z\mid Y}^{-1}(a-z).
\]
The Bayes action is the posterior mean and
\[
\Delta_{P_y}(a)=(a-\mu_y)^\top\Gamma_{Z\mid Y}^{-1}(a-\mu_y),
\]
so $G=2\Gamma_{Z\mid Y}^{-1}$.  For the Gaussian baseline, the minimum KL cost of shifting the mean by $h$ is $I_{Q_0}(h)=\tfrac12h^\top\Gamma_Z^{-1}h$, hence $J_I=\Gamma_Z^{-1}$.

Spantini et al.'s goal-oriented QoI directions $\widehat q_i$ can be normalized so that $\widehat q_i^\top\Gamma_Z^{-1}\widehat q_j=\delta_{ij}$ and satisfy
\[
\Gamma_{Z\mid Y}\Gamma_Z^{-1}\widehat q_i=(1-\lambda_i)\widehat q_i.
\]
It follows that
\[
2\Gamma_{Z\mid Y}^{-1}\widehat q_i
=\frac{2}{1-\lambda_i}\Gamma_Z^{-1}\widehat q_i.
\]
Their QoI modes are therefore the generalized decision--information modes here, with $\gamma_i=2/(1-\lambda_i)$ and the same ordering.

The average-case criterion also matches.  Under the prior-predictive distribution of the data,
\[
\operatorname{Var}\{\mu_{Z\mid Y}(Y)\}=\Gamma_Z-\Gamma_{Z\mid Y}.
\]
Writing $\mu_{Z\mid Y}(Y)=\sum_i C_i(Y)\widehat q_i$ gives $\E[C_i(Y)^2]=\lambda_i$.  Hence the ensemble-average regret contribution of mode $i$ is $\lambda_i/(1-\lambda_i)$, exactly their posterior-mean Bayes-risk contribution.

The effective dimension is an oracle statement: it identifies the best rank-$r$ action-space representation that could exist.  A concrete approximation family can have the same rank while pointing in the wrong directions, or can reach the right directions at unnecessary Fisher cost.  Those realization questions are the subject of Section~\ref{sec:restricted-families}.

%% file: restricted-families.tex
\section{Restricted Approximation Families: Coverage and Information Efficiency}
\label{sec:restricted-families}

The oracle problem in Section~\ref{sec:effective-dimension} identifies which decision directions should be retained when any action-space subspace is available.  To study a concrete approximation method, fix a specified family of measures $\cQ\subseteq\mathfrak C_{Q_0}$, where $\mathfrak C_{Q_0}$ is the ambient candidate class introduced in Section~\ref{sec:introduction}.  The family $\cQ$ is the object whose restrictions we diagnose; the target posterior $P$ remains a separate object that determines regret and need not belong to $\cQ$.  The family restriction introduces two additional questions.  \emph{Coverage} asks which first-order decision displacements $\cQ$ can realize.  \emph{Information efficiency} asks whether those reachable displacements can be realized at the ambient minimum Fisher cost.  Variational inference is the special case in which $\cQ$ is chosen to be a variational family.

\subsection{Global family adequacy and restricted contraction}

For the specified family $\cQ\subseteq\mathfrak C_{Q_0}$, define its action image
\[
\operatorname{Act}(\cQ)=\bigcup_{Q\in\cQ}\cB_0(Q).
\]
The family is globally $\epsilon$-adequate for target $P$ if
\[
\operatorname{Act}(\cQ)\cap\cB_\epsilon(P)\neq\varnothing.
\]
Thus family adequacy is a statement about which target-relevant actions are represented by the chosen measure family.

Fix $Q_0\in\cQ$.  Define the restricted contracted information
\begin{equation}
\label{eq:restricted-contracted-information-v2}
I_{Q_0,\cQ}(a)
=
\inf\left\{
\KL(Q\|Q_0):
Q\in\cQ,\ a\in\cB_0(Q)
\right\},
\end{equation}
with value $+\infty$ when the restricted action fiber is empty.  The corresponding restricted repair radius is
\[
\cR_{P,Q_0,\cQ}(\epsilon)
=
\inf\left\{
\KL(Q\|Q_0):
Q\in\cQ,\
\cB_0(Q)\cap\cB_\epsilon(P)\neq\varnothing
\right\}.
\]

\begin{proposition}[Restricted repair contraction]
\label{prop:restricted-repair-contraction}
For every $\epsilon\ge0$,
\[
\cR_{P,Q_0,\cQ}(\epsilon)
=
\inf_{a\in\cB_\epsilon(P)}I_{Q_0,\cQ}(a)
=
\inf_{a\in\operatorname{Act}(\cQ)\cap\cB_\epsilon(P)}
I_{Q_0,\cQ}(a).
\]
Consequently
\[
\cR_{P,Q_0,\cQ}(\epsilon)\ge\cR_{P,Q_0}(\epsilon).
\]
\end{proposition}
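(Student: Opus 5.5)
The plan follows the proof of Proposition~\ref{prop:repair-contraction-v2}, now carried out inside $\cQ$. First I introduce the restricted action fibers $\cC_a^{\cQ}=\{Q\in\cQ:a\in\cB_0(Q)\}=\cC_a\cap\cQ$. By \eqref{eq:restricted-contracted-information-v2}, $I_{Q_0,\cQ}(a)=\inf_{Q\in\cC_a^{\cQ}}\KL(Q\|Q_0)$, with the convention $\inf\varnothing=+\infty$. Next I check the set identity
\[
\{Q\in\cQ:\cB_0(Q)\cap\cB_\epsilon(P)\neq\varnothing\}=\bigcup_{a\in\cB_\epsilon(P)}\cC_a^{\cQ}.
\]
This is immediate: $Q$ belongs to the left side exactly when some $a\in\cB_\epsilon(P)$ lies in $\cB_0(Q)$. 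Partial minimization over a union, meaning that the infimum over $\bigcup_a X_a$ equals $\inf_a\inf_{X_a}$, then gives the first equality. The extended-value convention needs only a short check. If the union is empty, both sides are $+\infty$. Empty fibers contribute $+\infty$ and so do not affect the outer infimum.

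For the second equality, I note that $\cC_a^{\cQ}\neq\varnothing$ exactly when $a\in\operatorname{Act}(\cQ)$, by the definition of the action image. So $I_{Q_0,\cQ}(a)=+\infty$ for $a\in\cB_\epsilon(P)\setminus\operatorname{Act}(\cQ)$. Dropping these points leaves the infimum unchanged, whether the restricted index set $\operatorname{Act}(\cQ)\cap\cB_\epsilon(P)$ is nonempty or empty; in the empty case both expressions equal $+\infty$.

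For the comparison, I use $\cQ\subseteq\mathfrak C_{Q_0}$. This gives $\cC_a^{\cQ}\subseteq\cC_a$, hence $I_{Q_0,\cQ}(a)\ge I_{Q_0}(a)$ for every $a$. Taking the infimum over $a\in\cB_\epsilon(P)$ and applying Proposition~\ref{prop:repair-contraction-v2} yields $\cR_{P,Q_0,\cQ}(\epsilon)\ge\cR_{P,Q_0}(\epsilon)$. One could also argue directly from the inclusion of the feasible sets of the two repair problems.

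None of these steps is a real obstacle. The only point that needs care is the bookkeeping of empty fibers and the $+\infty$ convention in the second equality.
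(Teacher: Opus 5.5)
Your proposal is correct and follows the paper's argument: partial minimization over the union of restricted action fibers $\cC_a\cap\cQ$, with the inequality following from $\cQ\subseteq\mathfrak C_{Q_0}$. Your explicit handling of empty fibers and the $+\infty$ convention for the second equality fills in a step the paper leaves implicit.
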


\begin{proof}
Apply the same partial-minimization argument as in Proposition~\ref{prop:repair-contraction-v2} to the union of restricted action fibers.  The inequality follows because the restricted problem minimizes over $\cQ\subseteq\mathfrak C_{Q_0}$, a subset of the ambient candidate class.
\end{proof}

The ambient contraction over $\mathfrak C_{Q_0}$ is therefore the information-theoretic benchmark, while $I_{Q_0,\cQ}$ records the additional constraints imposed by the specified family.  Globally, an acceptable action can be unreachable inside $\cQ$, or it can be reachable only at greater KL cost.

\subsection{Local family geometry}

Return to the regular setting of Section~\ref{sec:local-geometry}.  Let $\mathcal H=L_0^2(Q_0)$ be the ambient score space and let
\[
E=\operatorname{ran}B
\]
be the effective action space, so $B:\mathcal H\to E$ is onto by construction and $J_I=(BB^*)^{-1}$.

For the local linear theory, let $\mathcal D_{\cQ}(Q_0)$ be the set of scores of regular paths that remain in $\cQ$ through $Q_0$.  Assume this set of feasible first-order scores is linear, and define the closed tangent space $T=\overline{\mathcal D_{\cQ}(Q_0)}^{\,L^2(Q_0)}\subset\mathcal H$, with orthogonal projector $\Pi_T$.  Thus directions in $T$ can be approximated by feasible family scores; taking an unrestricted linear span of a nonlinear tangent set would not justify the same coverage interpretation.  Define
\[
S_T:=B(T),
\qquad
C_T:=B\Pi_TB^*.
\]
The image $S_T$ contains the first-order decision displacements available to the family, while $C_T$ determines the minimum score energy of those displacements within $T$.

\begin{proposition}[Restricted tangent Fisher energy]
\label{prop:restricted-local-information-v2}
For $h\in E$,
\[
\inf_{s\in T:\,B[s]=h}
\frac12\norm{s}_{L^2(Q_0)}^2
=
\begin{cases}
\displaystyle \frac12h^\top C_T^\dagger h,&h\in S_T,\\[6pt]
+\infty,&h\notin S_T,
\end{cases}
\]
where $C_T^\dagger$ is the Moore--Penrose pseudoinverse.  In particular, the family has full first-order decision coverage, $B(T)=E$, if and only if $\operatorname{rank}(C_T)=\dim E$.
\end{proposition}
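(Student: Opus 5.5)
The plan is to reduce the restricted problem to a finite-dimensional minimum-norm problem by composing $B$ with the inclusion of $T$.
Let $B_T:=B|_T:T\to E$.
Since $E$ is finite-dimensional and $B$ is bounded, $B_T$ is a bounded operator with finite-rank range $\operatorname{ran}B_T=B(T)=S_T$, which is automatically closed.
Its Hilbert adjoint, computed with respect to the $L^2(Q_0)$ inner product on $T$ and the Euclidean inner product on $E$, is $B_T^*=\Pi_TB^*$.
To see this, note that for $s\in T$ and $y\in E$ we have $\langle B s,y\rangle=\langle s,B^*y\rangle_{Q_0}=\langle s,\Pi_TB^*y\rangle_{Q_0}$.
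Hence $B_TB_T^*=B\Pi_TB^*=C_T$.
I will use the same Euclidean identification of $E$ under which the paper writes $J_I=(BB^*)^{-1}$, so that $C_T$ is a positive semidefinite $d\times d$ matrix.

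The case $h\notin S_T$ is immediate: no $s\in T$ satisfies $B[s]=h$, so the infimum is over the empty set and equals $+\infty$.

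For $h\in S_T$, I mimic the derivation of the ambient lift $s_h^*=B^*J_Ih$ in Section~\ref{sec:local-geometry}.

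1. The fiber $\{s\in T:B_Ts=h\}$ is a nonempty closed affine subspace, a translate of $\ker B_T$. Its unique element of minimum norm is the one orthogonal to $\ker B_T$, that is, the one lying in $\overline{\operatorname{ran}B_T^*}=\operatorname{ran}B_T^*$. This range is closed because it is finite-dimensional.
2. The candidate minimizer is $s_{T,h}^*:=\Pi_TB^*C_T^\dagger h$. It lies in $\operatorname{ran}B_T^*$ by construction.
3. The candidate is feasible: $B_Ts_{T,h}^*=C_TC_T^\dagger h=h$. Here I use that $C_TC_T^\dagger$ is the orthogonal projector onto $\operatorname{ran}C_T$, together with $\operatorname{ran}C_T=\operatorname{ran}B_TB_T^*=\operatorname{ran}B_T=S_T\ni h$.
4. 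The energy is
\[
\|s_{T,h}^*\|^2=\langle C_T^\dagger h,\,B_TB_T^*C_T^\dagger h\rangle=h^\top C_T^\dagger C_TC_T^\dagger h=h^\top C_T^\dagger h,
\]
by the Moore--Penrose identity $C_T^\dagger C_TC_T^\dagger=C_T^\dagger$. Halving gives the claimed formula.

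The step needing most care is the identity $\operatorname{ran}(B_TB_T^*)=\operatorname{ran}B_T$. I would prove it by comparing kernels of the symmetric matrix $C_T$. If $C_Ty=0$, then $0=y^\top C_Ty=\|\Pi_TB^*y\|^2$, so $y\in\ker B_T^*=(\operatorname{ran}B_T)^\perp$. Conversely, $(\operatorname{ran}B_T)^\perp\subseteq\ker C_T$ holds trivially. Taking orthogonal complements in $E$ then gives the range identity.

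The coverage statement follows from the same identity. We have $\operatorname{rank}C_T=\dim S_T$, and since $S_T\subseteq E$, the equality $S_T=E$ holds exactly when $\operatorname{rank}C_T=\dim E$.
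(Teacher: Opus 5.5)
Your proposal is correct and follows essentially the same route as the paper: restrict $B$ to $T$, identify $B_T^*=\Pi_TB^*$ so that $C_T=B_TB_T^*$, obtain $\operatorname{ran}C_T=S_T$ by showing $\ker C_T=(\operatorname{ran}B_T)^\perp$, and take the minimum-norm lift $\Pi_TB^*C_T^\dagger h$. You also check feasibility and compute the energy via $C_T^\dagger C_TC_T^\dagger=C_T^\dagger$, and you derive the rank criterion from $\operatorname{rank}C_T=\dim S_T$; the paper leaves these steps implicit.
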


Appendix~\ref{app:restricted-family-details} gives the operator proof.  The statement concerns the linearized family: a direction in the closure $T$ is approximable by feasible first-order scores but need not be realized by a single finite move in the nonlinear family.  If feasible local directions form a cone or another nonlinear tangent set, their linear span can exaggerate available decision coverage and the present linear formulas require modification.

\subsection{Coverage, alignment, and effective dimension}

Full first-order decision coverage is stronger than necessary when positive regret is allowed.  A family may omit directions whose worst-case contribution lies below the decision tolerance.  Consider the ambient information ball
\[
\mathcal H_\rho=\left\{h:\frac12h^\top J_Ih\le\rho\right\}
\]
and define
\[
\mathcal E_\rho(T)
=
\sup_{h\in\mathcal H_\rho}
\inf_{s\in S_T}
\frac12(h-s)^\top G(h-s).
\]
If $S_T\neq\{0\}$ and $U$ is a full-column-rank matrix spanning $S_T$, let
\[
R_T=G-GU(U^\top GU)^{-1}U^\top G,
\]
and set $R_T=G$ when $S_T=\{0\}$.  Then
\[
\inf_{s\in S_T}\frac12(h-s)^\top G(h-s)
=
\frac12h^\top R_Th.
\]
Define the family coverage defect
\[
\eta_T
=
\lambda_{\max}(J_I^{-1/2}R_TJ_I^{-1/2}).
\]

\begin{proposition}[Restricted-family coverage certificate]
\label{prop:restricted-family-coverage-v2}
For every $\rho>0$,
\[
\mathcal E_\rho(T)=\rho\eta_T.
\]
Consequently the tangent image gives a uniform local regret guarantee $\epsilon$ over all ambient target displacements of information cost at most $\rho$ if and only if
\[
\rho\eta_T\le\epsilon.
\]
\end{proposition}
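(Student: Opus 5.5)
The proof reduces the statement to a Rayleigh-quotient computation, using the projection identity displayed just before it. First we verify that identity. For fixed $h$, the map $s\mapsto \tfrac12(h-s)^\top G(h-s)$ over $s=Uw\in S_T$ is a strictly convex quadratic in $w$, because $U$ has full column rank and $G\succ0$. Its normal equations are $U^\top G(h-Uw)=0$, which give $w=(U^\top GU)^{-1}U^\top Gh$. Substituting back yields $\tfrac12h^\top R_Th$. Equivalently, $R_T=G^{1/2}(I-\Pi)G^{1/2}$, where $\Pi$ is the Euclidean orthogonal projector onto $G^{1/2}S_T$. This form shows $R_T\succeq0$ and that the value does not depend on the choice of basis $U$. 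The case $S_T=\{0\}$ is immediate with $R_T=G$.

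With the identity in hand,
\[
\mathcal E_\rho(T)=\sup\Bigl\{\tfrac12h^\top R_Th:\ \tfrac12h^\top J_Ih\le\rho\Bigr\}.
\]
We whiten as in the proof of Theorem~\ref{thm:effective-dimension}: set $x=J_I^{1/2}h$, so the constraint becomes $\|x\|^2\le2\rho$ and the objective becomes $\tfrac12x^\top J_I^{-1/2}R_TJ_I^{-1/2}x$. The whitened matrix is symmetric positive semidefinite, so the supremum of this quadratic form over the Euclidean ball is $\tfrac12\cdot2\rho\cdot\lambda_{\max}=\rho\eta_T$. The supremum is attained at $x=\sqrt{2\rho}\,u$, where $u$ is a unit top eigenvector. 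This proves $\mathcal E_\rho(T)=\rho\eta_T$.

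The certificate is then a restatement. A uniform guarantee $\epsilon$ means that every $h\in\mathcal H_\rho$ has residual regret $\inf_{s\in S_T}\tfrac12(h-s)^\top G(h-s)\le\epsilon$. This is exactly the condition $\mathcal E_\rho(T)\le\epsilon$, because the supremum is attained and so no boundary subtlety arises.

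There is no serious obstacle here. The only point needing care is the projection identity: the minimizer must be unique, and $R_T$ must depend only on $S_T$ and not on $U$. The explicit normal-equation solution together with the projector form $G^{1/2}(I-\Pi)G^{1/2}$ settles both. One should also note that $S_T\subseteq E$ is finite dimensional, since $E$ is, so $U$ exists.
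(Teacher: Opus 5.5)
Your proof is correct and matches the paper's: the paper also whitens with $z=J_I^{1/2}h$, so the constraint becomes $\|z\|^2\le 2\rho$, and reads off $\rho\,\lambda_{\max}(J_I^{-1/2}R_TJ_I^{-1/2})$. Your check of the projection identity, which the paper states before the proposition and in the appendix, is a harmless addition, and the ``if and only if'' needs no attainment argument, since $\sup_h f(h)\le\epsilon$ holds exactly when $f(h)\le\epsilon$ for every $h$.
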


\begin{proof}
Whiten with $z=J_I^{1/2}h$.  The information constraint becomes $\norm z^2\le2\rho$, and the objective becomes $\tfrac12z^\top J_I^{-1/2}R_TJ_I^{-1/2}z$.  The maximum over the Euclidean ball is $\rho$ times the largest eigenvalue.
\end{proof}

The scalar $\eta_T$ measures alignment between the actual family image and the decision geometry.  It is zero exactly under full first-order decision coverage, while positive tolerance permits $\eta_T>0$ provided the resulting residual regret remains below budget.

Let $r_T=\dim S_T$ and let $\gamma_1\ge\cdots\ge\gamma_d>0$ be the generalized decision--information eigenvalues.  The oracle width result gives
\[
\eta_T\ge\gamma_{r_T+1},
\]
with $\gamma_{d+1}=0$.  Hence $r_T<d_{\mathrm{eff}}(\epsilon;\rho)$ rules out a uniform $(\epsilon,\rho)$ guarantee.  Rank alone is insufficient, however: two tangent images of the same dimension can have different values of $\eta_T$ because they point in different decision directions.  Equality with the oracle bound occurs when the image is aligned with the leading generalized modes.  Appendix~\ref{app:restricted-family-details} gives a two-dimensional rotated example.

\subsection{Information efficiency on reachable directions}

Coverage records which action directions are available.  A reachable displacement can still require more Fisher energy inside the family than in the ambient score space.  For $h\in S_T$, define
\[
I^{\mathrm{tan}}(h)=\frac12h^\top J_Ih,
\qquad
I_T^{\mathrm{tan}}(h)=\frac12h^\top C_T^\dagger h.
\]
Because the restricted problem minimizes over the smaller score space $T$,
\[
I_T^{\mathrm{tan}}(h)\ge I^{\mathrm{tan}}(h).
\]
When $S_T\neq\{0\}$, define the information-inflation factor
\[
\kappa_T
=
\sup_{h\in S_T\setminus\{0\}}
\frac{h^\top C_T^\dagger h}
{h^\top J_Ih}
\ge1.
\]
When $S_T=\{0\}$, set $\kappa_T=1$ by convention because there are no nonzero reachable directions on which to measure inefficiency.

For an ambient displacement $h$, the minimum-information lift is $s_h^*=B^*J_Ih$.  Therefore
\[
\kappa_T=1
\quad\Longleftrightarrow\quad
B^*J_I(S_T)\subseteq T.
\]
The family is Fisher-efficient on its reachable image exactly when it contains the ambient minimum-energy lift of every decision displacement it can realize.

The pair $(\eta_T,\kappa_T)$ therefore separates two family-specific properties at the baseline $Q_0$.  The quantity $\rho\eta_T$ is the worst residual regret caused by unavailable action directions over an ambient information ball of radius $\rho$.  The quantity $\kappa_T$ is the worst multiplicative Fisher-energy penalty along reachable directions.  Both depend on the chosen decision problem, the baseline, and the local geometry of the approximation family; neither is an intrinsic expressivity score for $\cQ$ in isolation.

%% file: examples.tex
\section{Examples: From Decision Geometry to Variational Restrictions}
\label{sec:examples}

The examples in this section isolate different parts of the construction.  Proper predictive scoring shows that one Bayes-action map can support several regret geometries.  Two centered Gaussian examples separate coverage from information efficiency for the same mean-field family.  A final structured-Gaussian comparison shows numerically that a family with better global KL fidelity can be worse for a specified downstream decision.

\subsection{Proper predictive scoring: the same action fibers, different regret geometry}
\label{subsec:proper-predictive-example-v2}

Let the outcome space be $\{1,\ldots,K\}$ and let $p(\theta)=(p_1(\theta),\ldots,p_K(\theta))\in\Delta^{K-1}$ be a predictive probability vector.  For posterior $Q$, define $m_Q=\E_Qp(\theta)$.  Under any strictly proper scoring rule the Bayes predictive action is $m_Q$, so all such scores share the action fibers $\cC_m=\{Q:\E_Qp=m\}$ \cite{dawid2007geometry,gneitingraftery2007strictly}.

What changes with the score is the regret assigned to action error.  Suppose the score is twice differentiable and strongly proper on the relevant simplex tangent space, so
\[
\Delta_P(m_P+u)
=
\frac12u^\top G_Su+o(\norm{u}^2),
\qquad G_S\succ0.
\]
By contrast, the local accessibility metric depends on the predictive map and the baseline posterior.  Writing
\[
\phi(\theta)=p(\theta)-m_0,
\qquad
\Sigma_p=\E_{Q_0}[\phi\phi^\top],
\]
we obtain on the attainable simplex tangent space
\[
I_{Q_0}(m_0+h)
=
\frac12h^\top\Sigma_p^\dagger h+o(\norm{h}^2),
\qquad
J_I=\Sigma_p^\dagger.
\]
Thus two proper scores can have the same Bayes-action map and the same accessibility metric while assigning different consequences to predictive errors.  For the Brier score, $G_S$ is Euclidean up to scale; for log score, it is the multinomial Fisher metric \cite{gneitingraftery2007strictly}.  The elicited functional fixes the action fibers, while the loss fixes the target-side regret geometry around them.

\subsection{Centered mean-field Gaussian approximation: a pure coverage defect}
\label{subsec:mf-covariance-example-v2}

Let $Q_0=N(0,I_2)$ and compare the centered full-Gaussian family
\[
\cQ_{\mathrm{full}}=\{N(0,\Sigma):\Sigma\succ0\}
\]
with the centered mean-field family
\[
\cQ_{\mathrm{MF}}
=\{N(0,\operatorname{diag}(\sigma_1^2,\sigma_2^2)):\sigma_1^2,\sigma_2^2>0\}.
\]
For a local covariance path $Q_t=N(0,I_2+tH)$ with $H=H^\top$, the score is
\[
s_H(\theta)
=
\frac12\{\theta^\top H\theta-\operatorname{tr}(H)\},
\qquad
\norm{s_H}_{L^2(Q_0)}^2
=
\frac12\operatorname{tr}(H^2).
\]
The centered mean-field tangent contains the two variance scores $\theta_1^2-1$ and $\theta_2^2-1$, but it does not contain the correlation score $\theta_1\theta_2$.

Consider the random second-moment vector
\[
\phi(\theta)
=
\begin{pmatrix}
\theta_1^2\\
\sqrt2\,\theta_1\theta_2\\
\theta_2^2
\end{pmatrix}
\]
and use weighted squared loss
\[
L(\theta,a)=(a-\phi(\theta))^\top W(a-\phi(\theta)),
\qquad
W=\operatorname{diag}(1,\omega,1),
\quad 0<\omega\le1.
\]
The action is $a\in\mathbb R^3$, and its Bayes value under $Q$ is $\Gamma(Q)=\E_Q\phi(\theta)$.  A covariance perturbation $H$ induces action displacement
\[
h=
\begin{pmatrix}
H_{11}\\
\sqrt2H_{12}\\
H_{22}
\end{pmatrix},
\qquad
\operatorname{tr}(H^2)=\norm h^2.
\]
Hence $J_I=\tfrac12I_3$ and $G=2W$, with generalized eigenvalues $4$, $4\omega$, and $4$.

The centered mean-field tangent image is $S_{\mathrm{MF}}=\operatorname{span}\{e_1,e_3\}$, so the correlation coordinate is missing and
\[
\eta_{\mathrm{MF}}=4\omega.
\]
Therefore
\[
\mathcal E_\rho(T_{\mathrm{MF}})=4\omega\rho,
\]
and centered mean-field gives the uniform local covariance-decision guarantee exactly when $\epsilon\ge4\omega\rho$.  The missing correlation direction matters only while its worst-case regret contribution lies above tolerance.

For $0<\omega<1$,
\[
d_{\mathrm{eff}}(\epsilon;\rho)
=
\begin{cases}
3,&0\le\epsilon<4\omega\rho,\\
2,&4\omega\rho\le\epsilon<4\rho,\\
0,&\epsilon\ge4\rho.
\end{cases}
\]
The mean-field image has dimension two and becomes sufficient exactly when the correlation-sensitive mode drops below tolerance.  The two retained diagonal directions are realized at the ambient minimum Fisher cost, so $\kappa_{\mathrm{MF}}=1$.  For this decision, mean field has a coverage/alignment defect without an information-efficiency defect.

Appendix~\ref{app:restricted-family-details} gives the Gaussian tangent calculation and an exact finite-Gaussian check showing that the local prediction $4\omega\rho$ agrees with the corresponding finite perturbation calculation to leading order.

\subsection{The same family can have complete coverage and excess information cost}
\label{subsec:mf-efficiency-example-v2}

Now change the decision while keeping the family fixed.  Consider the scalar statistic
\[
g_\alpha(\theta)
=
\theta_1\theta_2+\alpha(\theta_1^2-1),
\qquad \alpha\ne0,
\]
under squared-error loss for an action $a\in\mathbb R$.  Its Bayes action is $\Gamma(Q)=\E_Qg_\alpha(\theta)$.  At $Q_0=N(0,I_2)$,
\[
\operatorname{Var}(\theta_1\theta_2)=1,
\qquad
\operatorname{Var}(\theta_1^2-1)=2,
\qquad
\operatorname{Cov}(\theta_1\theta_2,\theta_1^2-1)=0.
\]
Thus $BB^*=1+2\alpha^2$ and $J_I=(1+2\alpha^2)^{-1}$.

Centered mean-field cannot use the correlation component of the ambient efficient score, but it can still move the scalar action through the marginal-variance component.  Projection onto its tangent space gives $B\Pi_{T_{\mathrm{MF}}}B^*=2\alpha^2>0$, so $B(T_{\mathrm{MF}})=\mathbb R$ and $\eta_{\mathrm{MF}}=0$.  The family has full first-order decision coverage for this scalar task.

The difference appears in Fisher energy.  For scalar displacement $h$,
\[
I_{\mathrm{full}}^{\mathrm{tan}}(h)
=
\frac{h^2}{2(1+2\alpha^2)},
\qquad
I_{\mathrm{MF}}^{\mathrm{tan}}(h)
=
\frac{h^2}{4\alpha^2},
\]
so
\[
\kappa_{\mathrm{MF}}
=
1+\frac1{2\alpha^2}>1.
\]
At $\alpha=1/\sqrt2$, $\kappa_{\mathrm{MF}}=2$.  Mean field can therefore realize every first-order scalar displacement, but it reaches those displacements by moving a marginal variance more strongly than the ambient minimum-information deformation requires.

The two mean-field examples summarize the distinction:
\[
\begin{array}{c|cc}
\text{decision} & \eta_{\mathrm{MF}} & \kappa_{\mathrm{MF}}\\
\hline
\text{weighted second-moment vector} & 4\omega & 1\\
\text{scalar mixed moment} & 0 & 1+1/(2\alpha^2).
\end{array}
\]
The local expressivity of centered mean-field is therefore decision-relative: one task exposes an unavailable correlation direction, while another task can use a marginal-variance direction as a more expensive substitute.

\subsection{A structured-Gaussian comparison: global KL and decision ranking can differ}
\label{subsec:structured-gaussian-comparison}

A small finite calculation illustrates how decision-relative ranking can differ from distribution-wide fidelity.  Consider a four-dimensional centered Gaussian target, representative of a regression posterior with two correlated coefficient pairs,
\[
P=N(0,\Sigma_P),
\qquad
\Sigma_P=
\begin{pmatrix}
1&0.70&0&0\\
0.70&1&0&0\\
0&0&1&0.35\\
0&0&0.35&1
\end{pmatrix}.
\]
Compare three centered Gaussian approximation families with unit marginal variances.  The mean-field family fixes both correlations at zero.  The family $\cQ_{12}$ permits only the $(1,2)$ correlation to vary, while $\cQ_{34}$ permits only the $(3,4)$ correlation to vary.  Because the target is block diagonal, the reverse-KL projections are obtained by matching the allowed target correlation and leaving the forbidden block independent:
\[
Q_{12}=N(0,\Sigma_{12}),
\qquad
Q_{34}=N(0,\Sigma_{34}),
\qquad
Q_{\mathrm{MF}}=N(0,I_4),
\]
where $\Sigma_{12}$ keeps correlation $0.70$ and sets the $(3,4)$ correlation to zero, while $\Sigma_{34}$ does the reverse.

For a centered Gaussian $Q=N(0,\Sigma_Q)$,
\[
\KL(Q\|P)
=
\frac12\left\{
\operatorname{tr}(\Sigma_P^{-1}\Sigma_Q)-4
+\log\frac{\det\Sigma_P}{\det\Sigma_Q}
\right\}.
\]
This gives
\[
\KL(Q_{12}\|P)=0.0743,
\qquad
\KL(Q_{34}\|P)=0.6241,
\qquad
\KL(Q_{\mathrm{MF}}\|P)=0.6984.
\]
By global reverse KL, $Q_{12}$ is the clearly superior structured approximation because it captures the stronger correlation.

Now consider two uncertainty-sensitive decisions.  Let
\[
z_{12}=2^{-1/2}(1,1,0,0)^\top,
\qquad
z_{34}=2^{-1/2}(0,0,1,1)^\top,
\]
and, for $j\in\{12,34\}$, use squared-error loss for the statistic
\[
g_j(\theta)=(z_j^\top\theta)^2.
\]
The Bayes action is the posterior second moment $a_j(Q)=\E_Q g_j=z_j^\top\Sigma_Qz_j$.  Under the target,
\[
a_{12}(P)=1.70,
\qquad
a_{34}(P)=1.35.
\]
The resulting target regrets of the three approximations are
\begin{center}
\begin{tabular}{lccc}
\toprule
approximation & $\KL(Q\|P)$ & $\Delta_{12}$ & $\Delta_{34}$\\
\midrule
$Q_{12}$ & $0.0743$ & $0$ & $0.1225$\\
$Q_{34}$ & $0.6241$ & $0.4900$ & $0$\\
$Q_{\mathrm{MF}}$ & $0.6984$ & $0.4900$ & $0.1225$\\
\bottomrule
\end{tabular}
\end{center}
The globally better structured approximation $Q_{12}$ is exact for the first decision, while the globally worse $Q_{34}$ is exact for the second.  The example is deliberately small, but it shows the operational distinction directly: distribution-wide KL fidelity supplies one ordering of approximation families, whereas a specified decision problem can supply another.  The calculations are reproduced by the script \texttt{experiments/decision\_relative\_gaussian\_comparison.py} in the accompanying repository.

%% file: discussion.tex
\section{Discussion}
\label{sec:discussion}

A specified decision problem induces three coupled objects for posterior approximation.  The target posterior and loss determine a regret geometry on actions; the baseline approximation determines the information required to move through that geometry; and a restricted approximation family determines which movements are available and at what Fisher cost.  In regular problems, the pair $(G,J_I)$ makes the first two objects explicit, while $(\eta_T,\kappa_T)$ summarize local family coverage and information efficiency.

\subsection{Implications for variational approximation}

The geometry gives a constructive criterion for comparing and extending variational families.  A useful family extension can improve decision support in two ways: it can add an omitted action direction with substantial regret consequence, thereby reducing $\eta_T$, or it can add a posterior score direction that lowers the Fisher cost of an already-reachable action displacement, thereby reducing $\kappa_T$.  These two improvements are distinct even when the candidate families have the same parameter count or tangent-image rank.

The oracle effective dimension $d_{\mathrm{eff}}(\epsilon;\rho)$ gives the smallest rank that could support a uniform decision guarantee at a chosen regret and information scale.  The coverage defect $\eta_T$ then measures how well the actual family image is aligned with that oracle geometry.  When coverage is complete, $\kappa_T$ determines whether the family reaches the relevant directions with the same local information efficiency as the ambient posterior class.

The examples illustrate why this decision-relative view can change a family comparison.  The same centered mean-field family can have a pure coverage defect for one decision and complete coverage with excess Fisher cost for another.  In the structured-Gaussian example, the family with the better global reverse-KL approximation is not the family that best preserves every downstream second-moment decision.  These are different questions because they weight posterior discrepancies through different operational maps.

\subsection{Tolerance and information scale}

The effective dimension is indexed by the regret tolerance $\epsilon$ and information scale $\rho$.  Tightening the regret budget can make previously negligible directions relevant, while increasing the posterior perturbation radius can expose additional consequential directions.  The family certificate $\rho\eta_T\le\epsilon$ has the same dependence.

The choice of $\rho$ is therefore part of the application-level specification of the approximation problem.  It can be set by a scientifically meaningful KL neighborhood, by the observed scale of a target--baseline mismatch, or by a distribution of plausible perturbations.  When a target ensemble is available, the average-case representation in Section~\ref{subsec:ensemble-dimension} replaces worst-case structural importance by the frequency with which plausible mismatches occupy each direction.

Structural and realized importance should be kept separate.  A large generalized eigenvalue means that a direction can generate substantial regret per unit information; it does not imply that a particular target mismatch has a large component in that direction.

\subsection{Scope of the local theory}

The quadratic theory covers regular finite-dimensional decisions with nondegenerate first-order response and enough localization to keep the relevant optimization near the baseline.  The global regret and information contractions remain meaningful beyond that regime, but their local approximations can change.  Binary classification already gives a simple nonquadratic example because the acceptable action set changes discontinuously.

The restricted-family diagnostics are also local.  Curved, disconnected, or irregular families can admit finite movements that are invisible from a single linear tangent space, and favorable tangent geometry does not imply that a nonconvex optimizer will find the relevant family member.  The exact restricted contraction remains the global benchmark; $(\eta_T,\kappa_T)$ describe the first-order geometry around $Q_0$ for regular families.

The ambient I-projection is likewise an information-theoretic benchmark rather than a claim that every minimizing distribution can arise from the original prior--likelihood model.  Model realizability, family restriction, optimization error, Monte Carlo error, and finite computation impose additional layers of constraint.

Finally, the present development treats $G$, $J_I$, and the response operators as known.  Applied use requires estimating these objects, and uncertainty in those estimates can change the apparent effective dimension, mode ordering, or family defects.  Quantifying that uncertainty is a statistical problem built on top of the geometry developed here.

\subsection{Directions for extension}

The geometry suggests a direct family-design problem.  When $\eta_T$ is driven by a small number of omitted high-consequence modes, one can seek the smallest tangent-space augmentation that captures them.  When $\eta_T=0$ but $\kappa_T$ is large, one can instead seek posterior directions that recover the ambient minimum-information lifts of reachable decisions.  This turns decision geometry into a guide for structured variational-family design.

There is also a dynamic version of the problem.  In an iterative approximation algorithm, the current approximation can be viewed as a state, admissible family or algorithm updates as controls, the induced decision displacement as an output, and KL or computational effort as a movement cost.  This formulation leads naturally to reachability and minimum-energy questions under algorithmic constraints and provides a concrete point of contact with control theory.

Beyond the regular regime, boundary decisions can lead to tangent-cone geometry, while degenerate or singular decision maps can require higher-order local structure rather than the quadratic pair $(G,J_I)$.  The global decision problem remains the same; the appropriate local approximation changes with the order and shape of the response.

\subsection{Conclusion}

For a specified downstream decision problem, posterior approximation can be assessed through three quantities: the regret induced by action error, the information required to induce that error from a baseline approximation, and the ability of a chosen approximation family to realize the corresponding decision directions.  This decision-relative organization yields exact global accessibility measures, local decision--information modes, a tolerance-dependent effective dimension, and separate diagnostics for family coverage and information efficiency.

%% file: appendix-regularity.tex
\section{Regularity Conditions for the Local Theory}
\label{app:regularity-v2}

The global regret and information contractions require relatively little local smoothness.  The quadratic theory in Sections~\ref{sec:local-geometry}--\ref{sec:restricted-families} uses additional regularity for three distinct purposes: identifying Bayes-action fibers by finite-dimensional equations, differentiating the Bayes-action map in Fisher score space, and localizing the moving constrained optimization problems.  This appendix records those assumptions by role.

\subsection{Equality-defined action fibers and exact I-projection}
\label{app:regularity-identification}

Let $\cA\subset\mathbb R^d$ be an open local action chart, let $Q_0$ be a baseline distribution, and let $a_0$ denote its unique Bayes action.  The exact contracted-information calculation uses the following conditions.

\begin{enumerate}[label=(I\arabic*)]
\item \textbf{Exact finite-information identification.}  There is a neighborhood $U$ of $a_0$ and a measurable map $\psi_a:\Theta\to\mathbb R^d$ such that, for every $a\in U$ and every $Q\in\mathfrak C_{Q_0}$ with $\KL(Q\|Q_0)<\infty$,
\[
a\in\cB_0(Q)
\quad\Longleftrightarrow\quad
\E_Q\psi_a=0.
\]
Thus the estimating equation characterizes the finite-information portion of the Bayes-action fiber exactly, rather than serving only as a stationarity condition.

\item \textbf{Smooth local log-partition map and admissible tilts.}  Define
\[
\Lambda(a,\lambda)
=\log\E_{Q_0}\exp\{\lambda^\top\psi_a\}.
\]
There are neighborhoods $U_0\subset U$ of $a_0$ and $L_0$ of $0\in\mathbb R^d$ such that $\Lambda$ is finite and $C^2$ on $U_0\times L_0$.  For every $(a,\lambda)$ in this neighborhood, the exponential tilt
\[
\frac{dQ_{a,\lambda}}{dQ_0}
=
\exp\left\{\lambda^\top\psi_a-\Lambda(a,\lambda)\right\}
\]
belongs to $\mathfrak C_{Q_0}$ (and has well-defined Bayes risk).

\item \textbf{Baseline identification.}  $\E_{Q_0}\psi_{a_0}=0$.

\item \textbf{Nondegenerate identification covariance.}  
\[
V:=\operatorname{Cov}_{Q_0}(\psi_{a_0})\succ0.
\]

\item \textbf{Nonsingular action sensitivity.}  
\[
H:=D_a[\E_{Q_0}\psi_a]_{a=a_0}
\]
is nonsingular.
\end{enumerate}

For a smooth loss, one sufficient route to (I1) is that every finite-information risk $R_Q(a)$ under consideration is differentiable and strictly convex on a convex action domain with an interior minimizer, with $\psi_a(\theta)=\nabla_aL(\theta,a)$.  The first-order condition then characterizes the unique global Bayes action.  The identification-function formulation also covers regular nonsmooth examples such as quantiles.

Assumption (I2) is stronger than a second-moment condition.  It supplies the local exponential moments needed for the exact KL minimizer and guarantees that the minimizing exponential tilts belong to the same ambient candidate class over which the KL contraction is defined.  The Hilbert-space Fisher geometry below uses only the corresponding pathwise differentiability and second-moment structure.

\subsection{Pathwise differentiability and Fisher geometry}
\label{app:regularity-pathwise}

Let $\mathcal H=L_0^2(Q_0)$ be the ambient score space.  For a regular path with
\[
\frac{dQ_t}{dQ_0}=1+t s+o_{L^2}(t),
\qquad s\in\mathcal H,
\]
and an action path $a_t=a_0+t h+o(t)$, assume the identifying equation has the expansion
\begin{equation}
\label{eq:app-pathwise-id-expansion}
\E_{Q_t}\psi_{a_t}
=
t\left\{
Hh+\E_{Q_0}[s\psi_{a_0}]
\right\}+o(t).
\end{equation}
Suitable $L^2$ continuity or differentiability of $a\mapsto\psi_a$, together with domination or uniform integrability sufficient to interchange differentiation and expectation, gives one route to this expansion.

The Bayes-action functional is then pathwise differentiable with derivative
\[
B[s]
=-H^{-1}\E_{Q_0}[s\psi_{a_0}].
\]
For the linearized theory, define the effective action space by
\[
E:=\operatorname{ran}B.
\]
The map $B:\mathcal H\to E$ is onto by construction.  Under (I4)--(I5), the covariance operator of $s\mapsto\E_{Q_0}[s\psi_{a_0}]$ is $V\succ0$, so in the regular equality-defined setting $E=\mathbb R^d$.  More generally, the Fisher geometry can be formulated directly on the finite-dimensional range $E$ whenever $B$ is bounded.

On $E$, the minimum-score-energy problem
\[
\inf_{s:B[s]=h}\frac12\norm{s}_{L^2(Q_0)}^2
\]
is governed by
\[
J_I=(BB^*)^{-1}.
\]
This pathwise construction requires only the bounded derivative $B$ and the relevant second moments; it can therefore remain available when the exponential-moment condition in (I2) fails.

\subsection{A checkable sufficient condition for smooth losses}
\label{app:regularity-smooth-loss}

The identification-function assumptions above are the primitive conditions used by the theory.  When the Bayes action is defined by a smooth loss, a more familiar sufficient condition can be stated directly in terms of $L$.  Let $A_0$ be a neighborhood of the baseline Bayes action $a_0$.

\begin{enumerate}[label=(S\arabic*)]
\item \textbf{Local smoothness and an interior optimum.}  The action $a_0$ is the unique interior minimizer of $R_{Q_0}$, and for $Q_0$-almost every $\theta$ the map $a\mapsto L(\theta,a)$ is twice continuously differentiable on $A_0$.

\item \textbf{Differentiation under the baseline expectation.}  There are integrable envelopes sufficient to interchange differentiation and expectation on $A_0$; for example,
\[
\E_{Q_0}\left[
\sup_{a\in A_0}\norm{\nabla_aL(\theta,a)}^2
\right]<\infty,
\qquad
\E_{Q_0}\left[
\sup_{a\in A_0}\norm{\nabla_a^2L(\theta,a)}_{\mathrm{op}}
\right]<\infty.
\]
These conditions imply that $\psi_a(\theta)=\nabla_aL(\theta,a)$ is locally $L^2(Q_0)$ and that
\[
H
=
D_a\E_{Q_0}[\psi_a]_{a=a_0}
=
\E_{Q_0}[\nabla_a^2L(\theta,a_0)].
\]

\item \textbf{Nondegenerate decision response.}  The matrix $H$ is nonsingular and
\[
V
=
\operatorname{Cov}_{Q_0}\{\nabla_aL(\theta,a_0)\}
\]
is positive definite on the effective action coordinates.

\item \textbf{Stable Bayes branch along regular paths.}  For each regular path $Q_t$ with score $s$ under consideration, assume that
\[
F(t,a):=\E_{Q_t}[\nabla_aL(\theta,a)]
\]
is continuously differentiable near $(0,a_0)$, with
\[
D_aF(0,a_0)=H,
\qquad
D_tF(0,a_0)=\E_{Q_0}[s\,\nabla_aL(\theta,a_0)].
\]
Assume also that, for sufficiently small $t$, a global Bayes action under $Q_t$ exists, is interior, and lies in the local neighborhood on which the implicit-function theorem gives the unique stationary branch.  This last requirement follows, for example, from uniform convergence of the risks along the path and a strict global risk gap away from $a_0$, provided the Bayes actions exist.  Under (S1)--(S4), the global Bayes-action derivative along these paths is
\[
B[s]
=
-H^{-1}\E_{Q_0}
\left[s\,\nabla_aL(\theta,a_0)\right],
\]
and the induced information metric is
\[
J_I=H^\top V^{-1}H=(BB^*)^{-1}.
\]

\item \textbf{Regular target regret.}  At the target posterior $P$, assume the analogous local differentiation conditions around its unique interior Bayes action $a_P$, together with isolation of the optimum and
\[
G
=
\nabla_a^2R_P(a_P)
=
\E_P[\nabla_a^2L(\theta,a_P)]
\succ0.
\]
Then the target regret has the quadratic germ
\[
\Delta_P(a_P+h)
=
\frac12h^\top Gh+o(\norm{h}^2).
\]
\end{enumerate}

Conditions (S1)--(S4) are sufficient for the local Bayes-action derivative and Fisher quotient geometry along the specified regular paths; (S5) addresses the target-side regret expansion.  To identify the exact finite-information Bayes fibers and derive the exact exponential I-projection, one additionally needs the stronger global-in-$Q$ identification and local exponential-integrability requirements in (I1)--(I2).  A convenient sufficient route is that every finite-information risk under consideration is strictly convex with an interior minimizer and that the local moment-generating function of $\nabla_aL(\theta,a)$ is finite near the origin.

Smoothness of the pointwise loss is not necessary.  The quantile example uses a nonsmooth pinball loss but admits a regular identification function with nonsingular derivative and finite variance, so the same local geometry follows.  The essential requirements are regularity of the Bayes-action functional and a nondegenerate quadratic target regret, rather than $C^2$ smoothness of $L$ itself.

\subsection{Target-side regularity}
\label{app:regularity-target}

For a fixed target posterior $P$, the local ellipsoidal form of the regret filtration uses the following assumptions.

\begin{enumerate}[label=(T\arabic*)]
\item The Bayes action $a_P$ is unique and lies in the interior of the action chart $\cA$.
\item The risk $R_P$ is $C^2$ on a neighborhood of $a_P$.
\item The regret Hessian
\[
G=\nabla_a^2R_P(a_P)
\]
is positive definite.
\item The optimum is isolated in regret: there is a neighborhood $V_P$ of $a_P$, with $\overline V_P$ contained in the local chart, such that
\[
\inf_{a\in\cA\setminus V_P}\Delta_P(a)>0.
\]
\end{enumerate}

The isolation condition promotes the local quadratic germ at $a_P$ to the entire sufficiently small regret sublevel set.  Without global isolation, the same quadratic calculation describes the connected local component containing $a_P$.

\subsection{Moving-target assumptions for repair and failure}
\label{app:regularity-moving-target}

Let $P_t$ be a sequence of targets with risks $R_t$, regrets $\Delta_t$, and unique Bayes actions $a_t$.  The local repair and robustness theorem uses the following finite-dimensional conditions.

\begin{enumerate}[label=(M\arabic*)]
\item \textbf{Local target displacement.}
\[
\frac{a_t-a_0}{t}\to\delta\in\mathbb R^d.
\]

\item \textbf{Regular contracted information.}  The exact value function $I_{Q_0}$ is $C^2$ on a neighborhood of $a_0$ and
\[
I_{Q_0}(a_0)=0,
\qquad
\nabla I_{Q_0}(a_0)=0,
\qquad
\nabla^2I_{Q_0}(a_0)=J_I\succ0.
\]
Conditions (I1)--(I5) provide one sufficient route to this assumption.

\item \textbf{Uniform target Hessian stability.}  For every $C<\infty$,
\[
\sup_{\|u\|\le C}
\left\|
\nabla^2R_t(a_t+t u)-G
\right\|_{\mathrm{op}}
\longrightarrow0
\]
for some $G\succ0$.

\item \textbf{Target and information localization.}  There exist $r,c_R,\kappa_R>0$ such that, for all sufficiently small $t$,
\[
\Delta_t(a)\ge c_R\|a-a_t\|^2
\qquad
\text{when }\|a-a_t\|\le r,
\]
and
\[
\inf_{\|a-a_t\|\ge r}\Delta_t(a)\ge\kappa_R.
\]
Choose also a sufficiently small fixed neighborhood $U$ of $a_0$ on which (M2) yields
\[
I_{Q_0}(a)\ge c_I\|a-a_0\|^2
\qquad (a\in U)
\]
for some $c_I>0$, and assume the global information separation
\[
\inf_{a\notin U}I_{Q_0}(a)\ge\kappa_I>0.
\]
\end{enumerate}

The two parts of (M4) play different localization roles.  The regret bounds keep the rescaled repair sets in a bounded neighborhood of the moving target action.  The information gap keeps $O(t^2)$ failure near-minimizers inside the fixed neighborhood $U$; the quadratic lower bound there then forces $\|a-a_0\|=O(t)$.  Together they rule out remote branches whose exact objective is as small as the local $O(t^2)$ scale.

Under (M1)--(M4), uniformly for $h$ in compact sets,
\[
I_{Q_0}(a_0+t h)
=
\frac{t^2}{2}h^\top J_Ih+o(t^2),
\qquad
\Delta_t(a_0+t h)
=
\frac{t^2}{2}(h-\delta)^\top G(h-\delta)+o(t^2).
\]
These are the two expansions used in Theorem~\ref{thm:local-repair-robustness-v2}.

\subsection{Restricted-family assumptions}
\label{app:regularity-family}

For the restricted-family diagnostics, let $E=\operatorname{ran}B$ be finite-dimensional.  Write $\mathcal D_{\cQ}(Q_0)$ for the set of scores of regular paths lying in $\cQ$ through $Q_0$.  The regular linear-family theory assumes that $\mathcal D_{\cQ}(Q_0)$ is a linear subspace (possibly nonclosed), and sets
\[
T:=\overline{\mathcal D_{\cQ}(Q_0)}^{\,L^2(Q_0)}\subset\mathcal H.
\]
The closure encodes approximability by feasible path scores, without declaring arbitrary linear combinations of unrelated feasible directions to be attainable.  As $B$ is bounded and $E$ finite-dimensional, $B(T)$ is the closure of the first-order action images of such paths.  In particular, this assumption excludes a family with a union of tangent axes whose linear span would contain unrealizable diagonal directions.  Its decision image
\[
S_T=B(T)
\]
is finite-dimensional and therefore closed, and the restricted score-energy operator is $B\Pi_TB^*$.

These statements concern the linearized family at $Q_0$.  A nonlinear or irregular parameterization can have finite movements not represented by one tangent space, and a tangent vector need not imply a corresponding finite global move.  Identifying the quadratic form based on $B\Pi_TB^*$ with the Hessian of the exact restricted contraction $I_{Q_0,\cQ}$ requires additional smooth path-realizability, second-order KL expansion, and localization conditions.

\subsection{Large-deviation qualification}
\label{app:regularity-sanov}

The large-deviation interpretation in Section~\ref{sec:posterior-accessibility} concerns the same deterministic KL variational problems; it supplies a probabilistic interpretation of the forward-KL radius rather than a model of approximation error.  Write
\[
\cJ_{Q_0}(Q)=\KL(Q\|Q_0)
\]
for the measure-level rate appearing in the i.i.d. case.  If
\[
Q_0^{(n)}=Q_0^{\otimes n}
\]
is the $n$-sample law and $\widehat Q_n$ is the empirical measure, Sanov's theorem supplies $\cJ_{Q_0}$ as the empirical-measure rate function under its standard topological hypotheses \cite{dembozeitouni1998}.

Sanov's principle applies on the empirical-measure state space (for instance, probability measures on a Polish sample space with the weak topology), not automatically on the candidate class $\mathfrak C_{Q_0}$.  In particular, if $Q_0$ is nonatomic and $\mathfrak C_{Q_0}$ requires finite $\KL(Q\|Q_0)$, every empirical measure $\widehat Q_n$ lies outside $\mathfrak C_{Q_0}$ almost surely.  The event in the probability statement must therefore be defined on the Sanov state space, as in the main text, rather than by restricting it to candidate distributions.

For a measurable empirical-measure event $A$, Sanov's upper and lower bounds yield
\[
-\inf_{\mu\in A^\circ}\cJ_{Q_0}(\mu)
\le
\liminf_{n\to\infty}\frac1n\log\Pr\{\widehat Q_n\in A\}
\le
\limsup_{n\to\infty}\frac1n\log\Pr\{\widehat Q_n\in A\}
\le
-\inf_{\mu\in\overline A}\cJ_{Q_0}(\mu).
\]
When the infima over the interior and closure agree, $A$ is a $\cJ_{Q_0}$-continuity set, and the event has exponent $\inf_{\mu\in A}\cJ_{Q_0}(\mu)$.  To identify that exponent with $\cR$ or $\cM$, the candidate-class restriction must additionally leave the corresponding rate infimum unchanged.  The empirical Bayes correspondence must also be well defined on the event; a direct contraction-principle derivation needs continuity, or an appropriate extended-contraction condition, of the action map.  Quantiles, ties, and set-valued decisions may violate such continuity assumptions without affecting the deterministic KL optimization.  For dependent samples, $Q_0^{(n)}$ denotes the general $n$-sample law and a suitable non-Sanov rate function may be required.

\subsection{Why the proof expands the contracted value function}

The regular proof uses the finite-dimensional value function $I_{Q_0}(a)$ rather than a uniform KL expansion over an unrestricted $L^2$ score ball.  Bounded subsets of $L_0^2(Q_0)$ can contain scores with spikes of height $O(t^{-1})$ on events of probability $O(t^2)$; such scores remain $L^2$ bounded while failing to be uniformly infinitesimal pointwise.  The $C^2$ expansion of the exact contracted-information value supplied by (I1)--(I5) avoids this obstruction and provides the uniform-on-compact action-space expansion needed by the moving optimization problems.

%% file: appendix-contracted-information.tex
\section{Exact Contracted Information and the Local Information Metric}
\label{app:contracted-info-v2}

This appendix derives the baseline-side metric used in Section~\ref{sec:local-geometry}.  Under (I1)--(I5), an action near $a_0$ is characterized on the finite-information part of its Bayes fiber by the equality constraint $\E_Q\psi_a=0$.  The KL contraction can therefore be solved by an exponential tilt, and its Hessian agrees with the inverse covariance geometry induced by the pathwise Bayes-action derivative.

\subsection{Multiplier map and exact I-projection}

Write
\[
\Lambda(a,\lambda)
=
\log\E_{Q_0}\exp\{\lambda^\top\psi_a\},
\qquad
F(a,\lambda)=\nabla_\lambda\Lambda(a,\lambda).
\]
At the baseline,
\[
F(a_0,0)=\E_{Q_0}\psi_{a_0}=0,
\qquad
D_\lambda F(a_0,0)=V\succ0.
\]
The implicit-function theorem therefore gives a neighborhood of $a_0$ and a unique $C^1$ multiplier map $\lambda(a)$, with $\lambda(a_0)=0$, such that
\[
\nabla_\lambda\Lambda(a,\lambda(a))=0.
\]
Differentiating at $a_0$ gives
\begin{equation}
\label{eq:app-v2-lambda-derivative}
D\lambda(a_0)=-V^{-1}H.
\end{equation}

For $a$ in this neighborhood, define the tilted law
\begin{equation}
\label{eq:app-v2-exponential-tilt}
\frac{dQ_a^*}{dQ_0}
=
\exp\left\{
\lambda(a)^\top\psi_a
-
\Lambda(a,\lambda(a))
\right\}.
\end{equation}
By construction,
\[
\E_{Q_a^*}\psi_a
=
\nabla_\lambda\Lambda(a,\lambda(a))
=0.
\]
Assumption (I2) makes $Q_a^*$ decision-admissible, and (I1) therefore places it in the Bayes-action fiber $\cC_a$.  Its KL divergence is
\[
\KL(Q_a^*\|Q_0)
=
-\Lambda(a,\lambda(a)),
\]
because the tilted expectation of $\psi_a$ vanishes.

\begin{proposition}[Exact local I-projection]
\label{prop:app-v2-exact-projection}
Under (I1)--(I5), for every $a$ sufficiently close to $a_0$,
\[
Q_a^*\in\arg\min_{Q\in\cC_a}\KL(Q\|Q_0)
\]
and
\begin{equation}
\label{eq:app-v2-contracted-value}
I_{Q_0}(a)
=
-\Lambda(a,\lambda(a)).
\end{equation}
\end{proposition}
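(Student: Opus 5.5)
The plan is the standard Gibbs-variational (Donsker--Varadhan) lower bound, combined with the exact identification (I1). We have already shown that $Q_a^*\in\cC_a$ and that $\KL(Q_a^*\|Q_0)=-\Lambda(a,\lambda(a))$. So $I_{Q_0}(a)\le-\Lambda(a,\lambda(a))$, and it remains to prove the reverse inequality for every competitor in the fiber.

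Fix $a$ in the neighborhood where $\lambda(a)$ is defined and $(a,\lambda(a))\in U_0\times L_0$. Let $Q\in\cC_a$ be arbitrary. If $\KL(Q\|Q_0)=\infty$ there is nothing to prove. Otherwise $Q\ll Q_0$ and, by (I1), $\E_Q\psi_a=0$. Write $f=\lambda(a)^\top\psi_a-\Lambda(a,\lambda(a))$, so that $dQ_a^*/dQ_0=e^f$. Then
\[
\KL(Q\|Q_0)=\KL(Q\|Q_a^*)+\E_Q f ,
\]
using $\log(dQ/dQ_0)=\log(dQ/dQ_a^*)+f$. Since $\E_Q f=\lambda(a)^\top\E_Q\psi_a-\Lambda=-\Lambda(a,\lambda(a))$, nonnegativity of $\KL(Q\|Q_a^*)$ gives
\[
\KL(Q\|Q_0)\ge-\Lambda(a,\lambda(a)).
\]
Taking the infimum over $\cC_a$ and combining with attainment yields both claims. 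Equality holds only when $Q=Q_a^*$, which also gives uniqueness as a byproduct.

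The main technical point is justifying the decomposition when $\E_Q f$ might not be well defined. The issue is whether $\psi_a$ is $Q$-integrable, so that $\E_Q\psi_a=0$ has meaning, and whether $\log(dQ/dQ_0)$ splits without an $\infty-\infty$ ambiguity. I would handle this as follows.

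First, $Q_a^*$ and $Q_0$ are mutually absolutely continuous, because the density $e^f$ is strictly positive. Hence $Q\ll Q_a^*$, and $\KL(Q\|Q_a^*)$ is defined in $[0,\infty]$.

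Second, the positive part of $f$ is $Q$-integrable. This follows from the Young/Fenchel inequality $uv\le u\log u-u+e^v$, applied with $u=dQ/dQ_0$ and $v=f_+$. It uses finiteness of $\KL(Q\|Q_0)$ together with $\E_{Q_0}e^{f_+}\le 1+\E_{Q_0}e^{f}<\infty$. Consequently $\E_Q f$ is well defined in $[-\infty,\infty)$, and the identity holds as an equality of extended reals.

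Third, the value $\E_Qf=-\Lambda$ requires $\E_Q\psi_a=0$ in the integrable sense. This is what (I1) asserts for finite-information competitors, so I would read (I1) as including $Q$-integrability of $\psi_a$. If needed, integrability can instead be derived from the Fenchel bound applied with $\pm\epsilon\psi_a$, using the finiteness of $\Lambda$ on a neighborhood of $\lambda=0$ given by (I2).

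Finally, I would verify that $a$ close to $a_0$ keeps $\lambda(a)\in L_0$. This follows from continuity of the implicit-function map, since $\lambda(a_0)=0$.
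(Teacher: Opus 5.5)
Your proof is correct and is essentially the paper's argument. The paper uses (I1) to get $\E_Q\psi_a=0$ for finite-KL competitors and then applies the entropy (Gibbs/Donsker--Varadhan) variational inequality at $\lambda=\lambda(a)$; your identity $\KL(Q\|Q_0)=\KL(Q\|Q_a^*)+\E_Q f$ is the standard proof of that inequality specialized to this multiplier, and your Fenchel--Young integrability check and uniqueness byproduct (equality forces $\KL(Q\|Q_a^*)=0$, i.e.\ $Q=Q_a^*$) are useful refinements the paper leaves implicit rather than a different route.
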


\begin{proof}
Let $Q\in\cC_a$.  Infinite-KL competitors cannot improve on the finite value attained by $Q_a^*$.  If $\KL(Q\|Q_0)<\infty$, assumption (I1) gives $\E_Q\psi_a=0$.  The entropy variational inequality gives, for every admissible multiplier $\lambda$,
\[
\KL(Q\|Q_0)
\ge
\lambda^\top\E_Q\psi_a-\Lambda(a,\lambda)
=
-\Lambda(a,\lambda).
\]
Taking $\lambda=\lambda(a)$ gives the lower bound $-\Lambda(a,\lambda(a))$, and the tilted law in \eqref{eq:app-v2-exponential-tilt} attains it.
\end{proof}

The proposition is local in the action coordinate but global over the ambient distributional fiber: once $a$ is close enough to $a_0$ for the multiplier construction to apply, the minimization is over the full Bayes fiber rather than only over nearby distributions.

\subsection{Hessian of the contracted-information value}

Because $\Lambda(a,0)=0$ for every $a$, all pure $a$-derivatives of $\Lambda(a,0)$ vanish.  In particular,
\[
\nabla_a\Lambda(a_0,0)=0,
\qquad
\nabla_{aa}^2\Lambda(a_0,0)=0.
\]
Define
\[
I(a):=I_{Q_0}(a)=-\Lambda(a,\lambda(a)).
\]
The multiplier equation gives the envelope identity
\[
\nabla I(a)
=-\nabla_a\Lambda(a,\lambda(a)),
\]
so $\nabla I(a_0)=0$.  At $(a_0,0)$,
\[
\nabla_{a\lambda}^2\Lambda(a_0,0)=H^\top.
\]
Differentiating the envelope identity and using \eqref{eq:app-v2-lambda-derivative} therefore yields
\begin{align*}
\nabla^2I(a_0)
&=-\nabla_{aa}^2\Lambda(a_0,0)
-\nabla_{a\lambda}^2\Lambda(a_0,0)D\lambda(a_0)\\
&=H^\top V^{-1}H.
\end{align*}
Hence
\begin{equation}
\label{eq:app-v2-JI}
J_I
:=
\nabla^2I_{Q_0}(a_0)
=
H^\top V^{-1}H
\succ0.
\end{equation}
The $C^2$ value function consequently has the expansion
\[
I_{Q_0}(a_0+t h)
=
\frac{t^2}{2}h^\top J_Ih+o(t^2)
\]
uniformly for $h$ in compact subsets of the local action chart.

\subsection{Identification with the pathwise decision derivative}

Assume additionally the pathwise expansion \eqref{eq:app-pathwise-id-expansion}.  If
\[
\Gamma(Q_t)=a_0+t h+o(t)
\]
for a regular path with score $s$, differentiating the identifying equation gives
\[
Hh+\E_{Q_0}[s\psi_{a_0}]=0.
\]
Thus
\[
B[s]
=-H^{-1}\E_{Q_0}[s\psi_{a_0}].
\]
Let $\mathsf M:\mathcal H\to\mathbb R^d$ be
\[
\mathsf M[s]=\E_{Q_0}[s\psi_{a_0}].
\]
Then
\[
\mathsf M\mathsf M^*=V,
\qquad
B=-H^{-1}\mathsf M,
\]
so
\[
BB^*=H^{-1}VH^{-\top}.
\]
Combining this identity with \eqref{eq:app-v2-JI} gives
\[
J_I=(BB^*)^{-1}.
\]
The Hessian of the exact contracted KL value is therefore the inverse covariance metric induced by the pathwise Bayes-action derivative.

\subsection{Minimum-information lift and the first-order projection path}

Let
\[
E=\operatorname{ran}B
\]
be the effective action space.  For a desired displacement $h\in E$, consider
\[
\min_{s\in\mathcal H}
\left\{
\frac12\norm{s}_{L^2(Q_0)}^2:
B[s]=h
\right\}.
\]
The unique minimum-norm solution is
\[
s_h^*
=B^*(BB^*)^{-1}h
=B^*J_Ih,
\]
with energy
\[
\frac12\norm{s_h^*}_{L^2(Q_0)}^2
=
\frac12h^\top J_Ih.
\]
Equation \eqref{eq:app-v2-lambda-derivative} gives
\[
\lambda(a_0+t h)
=-tV^{-1}Hh+o(t).
\]
If the tilted densities in \eqref{eq:app-v2-exponential-tilt} are differentiable in $L^2(Q_0)$ at the baseline, then
\[
\frac{dQ_{a_0+t h}^*}{dQ_0}
=
1+t s_h^*+o_{L^2}(t),
\]
with the explicit score
\[
s_h^*(\theta)
=-\psi_{a_0}(\theta)^\top V^{-1}Hh.
\]
Thus the horizontal minimum-energy lift used in the main text is the first-order tangent to the exact exponential I-projection.

\subsection{Canonical checks}

\paragraph{Mean.}
For squared loss, take $\psi_a=a-g(\theta)$.  Then
\[
H=1,
\qquad
V=\operatorname{Var}_{Q_0}(g),
\qquad
J_I=\operatorname{Var}_{Q_0}(g)^{-1}.
\]

\paragraph{Quantile.}
Consider pinball loss at level $\tau\in(0,1)$ and suppose $Q_0$ has continuous CDF $F_0$ with $\E_{Q_0}|Y|<\infty$.  For an arbitrary distribution $Q$, the Bayes condition for action $a$ is
\[
F_Q(a^-)\le\tau\le F_Q(a).
\]
If $\KL(Q\|Q_0)<\infty$, then $Q\ll Q_0$.  Continuity of $F_0$ therefore implies $Q(\{a\})=0$, and the Bayes condition reduces to $F_Q(a)=\tau$.  The finite-information Bayes fiber is exactly the equality constraint generated by
\[
\psi_a(Y)=\mathbf1\{Y\le a\}-\tau.
\]

Let $A_a=(-\infty,a]$ and $p=F_0(a)$.  For $0<p<1$, every finite-KL feasible $Q$ satisfies $Q(A_a)=\tau$, and the KL chain rule gives
\begin{align*}
\KL(Q\|Q_0)
&=
\operatorname{kl}(\tau\|p)\\
&\quad+
\tau\KL\!\left(Q(\cdot\mid A_a)\,\middle\|\,Q_0(\cdot\mid A_a)\right)\\
&\quad+
(1-\tau)\KL\!\left(Q(\cdot\mid A_a^c)\,\middle\|\,Q_0(\cdot\mid A_a^c)\right),
\end{align*}
where
\[
\operatorname{kl}(u\|v)
=u\log\frac uv+(1-u)\log\frac{1-u}{1-v}
\]
is Bernoulli KL.  The minimum preserves the baseline conditional laws on the two sides of $a$ and changes only their weights:
\[
\frac{dQ_a^*}{dQ_0}(y)
=
\frac{\tau}{F_0(a)}\mathbf1\{y\le a\}
+
\frac{1-\tau}{1-F_0(a)}\mathbf1\{y>a\}.
\]
The density ratio is bounded for fixed $a$ with $0<F_0(a)<1$, so $Q_a^*$ inherits finite first moment from $Q_0$.  Consequently
\[
I_{Q_0}(a)
=
\operatorname{kl}\!\left(\tau\,\middle\|\,F_0(a)\right),
\]
with value $+\infty$ when $F_0(a)\in\{0,1\}$.  If $a_0$ satisfies $F_0(a_0)=\tau$ and $F_0$ has a positive sufficiently smooth density $f_0$ near $a_0$, then
\[
H=f_0(a_0),
\qquad
V=\tau(1-\tau),
\qquad
J_I
=
\frac{f_0(a_0)^2}{\tau(1-\tau)}.
\]
The same value follows by differentiating the binary-KL contraction twice at $a_0$.

\paragraph{Finite-outcome proper prediction.}
Work in effective simplex coordinates and take
\[
\psi_q(\theta)=p(\theta)-q.
\]
Then
\[
H=-I,
\qquad
V=\operatorname{Cov}_{Q_0}(p(\theta)),
\]
so $J_I=V^{-1}$ on the effective tangent space, or equivalently $V^\dagger$ in redundant simplex coordinates.  Since $p(\theta)$ is bounded, the local exponential-integrability requirement is automatic.

%% file: appendix-local-proofs.tex
\section{Proofs for the Local Repair and Robustness Limits}
\label{app:local-proofs-v2}

This appendix proves Theorem~\ref{thm:local-repair-robustness-v2} and records the generalized-mode KKT formulas used in Section~\ref{sec:local-geometry}.  The key technical issue is localization: uniform quadratic expansions identify the limiting optimization only after near-minimizers are shown to remain inside the $O(t)$ neighborhood where those expansions apply.

\subsection{Regular blow-up of regret sublevel sets}

Let $P$ have a unique isolated interior Bayes action $a_P$, let $R_P$ be $C^2$ near $a_P$, and let
\[
G=\nabla_a^2R_P(a_P)\succ0.
\]
Fix $\eta\in(0,\lambda_{\min}(G))$.  By continuity of the Hessian,
\[
G-\eta I
\preceq
\nabla_a^2R_P(a_P+h)
\preceq
G+\eta I
\]
for all sufficiently small $h$.  Taylor's theorem in integral form and $\nabla R_P(a_P)=0$ give
\begin{equation}
\label{eq:app-v2-regret-sandwich}
\frac12h^\top(G-\eta I)h
\le
\Delta_P(a_P+h)
\le
\frac12h^\top(G+\eta I)h.
\end{equation}
The isolation condition in Appendix~\ref{app:regularity-target} forces the entire sufficiently small regret sublevel set into this local chart.  Setting $h=\sqrt\epsilon\,u$ in \eqref{eq:app-v2-regret-sandwich} sandwiches the rescaled acceptable set between
\[
\left\{u:\frac12u^\top(G+\eta I)u\le1\right\}
\quad\text{and}\quad
\left\{u:\frac12u^\top(G-\eta I)u\le1\right\}.
\]
Letting first $\epsilon\downarrow0$ and then $\eta\downarrow0$ yields
\[
\frac{\cB_\epsilon(P)-a_P}{\sqrt\epsilon}
\xrightarrow{d_H}
\left\{u:\frac12u^\top Gu\le1\right\}.
\]
Without global isolation, the same argument applies to the connected local component containing $a_P$.

\subsection{A compact optimization lemma}

\begin{lemma}[Stability of constrained minima]
\label{lem:app-v2-compact-minima}
Let $F_t,F\subset\mathbb R^d$ be nonempty compact sets contained in a common compact set $K$, and suppose
\[
F_t\xrightarrow{d_H}F.
\]
Let $\phi_t,\phi:K\to\mathbb R$ be continuous and satisfy
\[
\sup_{h\in K}|\phi_t(h)-\phi(h)|\to0.
\]
Then
\[
\inf_{h\in F_t}\phi_t(h)
\longrightarrow
\inf_{h\in F}\phi(h).
\]
If the limiting minimizer is unique, every sequence of exact minimizers converges to it.
\end{lemma}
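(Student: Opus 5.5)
The plan is to prove the two inequalities $\liminf_t m_t\ge m$ and $\limsup_t m_t\le m$ separately, where $m_t:=\inf_{F_t}\phi_t$ and $m:=\inf_F\phi$. Both infima are attained because the sets are compact and the functions continuous. Write $\varepsilon_t:=\sup_K|\phi_t-\phi|\to0$. The key preliminary fact is that $\phi$ is uniformly continuous on the compact set $K$; let $\omega$ denote its modulus of continuity. Hausdorff convergence provides $\tau_t:=d_H(F_t,F)\to0$.

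For the upper bound, take a minimizer $h^*\in F$ of $\phi$. Hausdorff convergence gives a point $h_t\in F_t$ with $\|h_t-h^*\|\le\tau_t$. Then
\[
m_t\le\phi_t(h_t)\le\phi(h_t)+\varepsilon_t\le\phi(h^*)+\omega(\tau_t)+\varepsilon_t,
\]
and the right-hand side tends to $m$.

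For the lower bound, take a minimizer $h_t\in F_t$ of $\phi_t$ and choose $g_t\in F$ with $\|g_t-h_t\|\le\tau_t$. Then
\[
m_t=\phi_t(h_t)\ge\phi(h_t)-\varepsilon_t\ge\phi(g_t)-\omega(\tau_t)-\varepsilon_t\ge m-\omega(\tau_t)-\varepsilon_t .
\]
Together the two bounds give $m_t\to m$.

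For the minimizer statement, let $h_t$ be exact minimizers and suppose $h^*$ is the unique minimizer of $\phi$ on $F$. Since every $h_t$ lies in $K$, compactness gives that each subsequence has a further subsequence converging to some $\bar h\in K$. Because $\mathrm{dist}(h_t,F)\le\tau_t\to0$ and $F$ is closed, $\bar h\in F$. Along that subsequence, uniform convergence and continuity give $\phi(\bar h)=\lim\phi_t(h_t)=\lim m_t=m$, so $\bar h$ minimizes $\phi$ on $F$, and uniqueness forces $\bar h=h^*$. Since every subsequence has a further subsequence converging to $h^*$, the whole sequence converges to $h^*$.

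The main point requiring care is the interplay between set convergence and function convergence. Pointwise convergence of $\phi_t$ would not be enough; the argument needs the uniform convergence on the common compact $K$ together with the uniform continuity of $\phi$ there. This is precisely why the lemma assumes all sets lie in a common compact $K$. With that in hand, the argument is routine.
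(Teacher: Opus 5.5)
Your proof is correct and has the same overall structure as the paper's: a nearby point of $F_t$ for the $\limsup$ bound, and a subsequence-plus-uniqueness argument for convergence of the minimizers. The only difference is the $\liminf$ bound. The paper extracts convergent subsequences of the minimizers $h_t$ and uses continuity of $\phi$ at the limit, while you move $h_t$ to a nearby point of $F$ and use the modulus of continuity of $\phi$ on $K$, which also gives the explicit estimate $|m_t-m|\le\omega(\tau_t)+\varepsilon_t$.
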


\begin{proof}
Let $h_t$ minimize $\phi_t$ on $F_t$.  Compactness gives convergent subsequences.  If $h_{t_n}\to h$, Hausdorff convergence implies $h\in F$, and uniform convergence of $\phi_t$ together with continuity of $\phi$ gives
\[
\phi_{t_n}(h_{t_n})\to\phi(h)\ge\inf_F\phi.
\]
Conversely, for any $h\in F$, Hausdorff convergence gives $\widetilde h_t\in F_t$ with $\widetilde h_t\to h$, so
\[
\limsup_t\inf_{F_t}\phi_t
\le
\lim_t\phi_t(\widetilde h_t)
=
\phi(h).
\]
Taking the infimum over $h\in F$ proves convergence of the values.  Uniqueness of the limiting minimizer then forces convergence of the full minimizer sequence.
\end{proof}

\subsection{Repair limit}

Use assumptions (M1)--(M4) and write
\[
d_t=\frac{a_t-a_0}{t}\to\delta.
\]
Define
\[
q_t(h)=\frac{\Delta_t(a_0+t h)}{t^2},
\qquad
q(h)=\frac12(h-\delta)^\top G(h-\delta).
\]
The target Hessian stability and target localization in (M3)--(M4) imply that $q_t\to q$ uniformly on bounded sets and that, for each fixed $c\ge0$, the rescaled acceptable sets
\[
F_t^{\mathrm{rep}}(c)
=
\{h:q_t(h)\le c\}
\]
lie in a common compact set.  The same quadratic sandwich as above gives
\[
F_t^{\mathrm{rep}}(c)
\xrightarrow{d_H}
F^{\mathrm{rep}}(c)
:=
\{h:q(h)\le c\}.
\]

Define
\[
\phi_t(h)=\frac{I_{Q_0}(a_0+t h)}{t^2},
\qquad
\phi(h)=\frac12h^\top J_Ih.
\]
Assumption (M2) gives uniform convergence $\phi_t\to\phi$ on compact sets.  Lemma~\ref{lem:app-v2-compact-minima} therefore yields
\[
\frac{\cR_{P_t,Q_0}(t^2c)}{t^2}
\longrightarrow
\min_{h:q(h)\le c}
\frac12h^\top J_Ih.
\]
The feasible set is convex and $J_I\succ0$, so the limiting repair minimizer is unique.  The rescaled information-minimizing repaired action converges to that minimizer.  This proves the repair half of Theorem~\ref{thm:local-repair-robustness-v2}.

\subsection{Failure limit for the strict global event}

Fix $c>c_0$, where
\[
c_0=\frac12\delta^\top G\delta=q(0).
\]
The exact robustness radius uses strict target failure $\Delta_t(a)>t^2c$.  In rescaled coordinates define
\[
F_t^{>}(c)=\{h:q_t(h)>c\},
\qquad
F^{\ge}(c)=\{h:q(h)\ge c\}.
\]
The limiting optimization is over the closed set $F^{\ge}(c)$ because the infimum over the strict exterior can be approached from its boundary.

Choose any $\bar h$ with $q(\bar h)>c$.  Uniform convergence gives $q_t(\bar h)>c$ for all sufficiently small $t$, so
\[
\inf_{F_t^{>}(c)}\phi_t
\le
\phi_t(\bar h)
=O(1).
\]
Equivalently, the exact strict-failure value is $O(t^2)$.  The information-gap part of (M4) then forces every $O(t^2)$ near-minimizer into the fixed neighborhood $U$ of $a_0$.  Inside $U$,
\[
I_{Q_0}(a)\ge c_I\|a-a_0\|^2,
\]
so any such near-minimizer satisfies $\|a-a_0\|=O(t)$.  Hence the corresponding rescaled near-minimizers lie in a common compact set $K$.

For the lower bound, choose $h_t\in F_t^{>}(c)\cap K$ whose objective is within $o(1)$ of the rescaled strict-failure infimum.  Along any convergent subsequence $h_t\to h$, uniform convergence of $q_t$ gives $q(h)\ge c$, while uniform convergence of $\phi_t$ gives
\[
\liminf_t\phi_t(h_t)
\ge
\min_{q(h)\ge c}\phi(h).
\]
Therefore
\[
\liminf_t
\frac{\cM_{P_t,Q_0}(t^2c)}{t^2}
\ge
\min_{q(h)\ge c}
\frac12h^\top J_Ih.
\]

For the matching upper bound, let $h^*$ minimize $\phi$ over $q(h)\ge c$.  Since $q(0)=c_0<c$, an interior feasible point cannot minimize: if $q(h^*)>c$, the segment from $0$ to $h^*$ crosses the boundary at some $\tau h^*$ with $0<\tau<1$, and
\[
\phi(\tau h^*)=\tau^2\phi(h^*)<\phi(h^*).
\]
Thus every minimizer satisfies $q(h^*)=c$.  For $\alpha>0$, define
\[
h_\alpha
=
\delta+(1+\alpha)(h^*-\delta).
\]
Then
\[
q(h_\alpha)=(1+\alpha)^2c>c.
\]
For fixed $\alpha>0$, uniform convergence gives $q_t(h_\alpha)>c$ eventually, so
\[
\limsup_t
\frac{\cM_{P_t,Q_0}(t^2c)}{t^2}
\le
\phi(h_\alpha).
\]
Letting $\alpha\downarrow0$ yields
\[
\limsup_t
\frac{\cM_{P_t,Q_0}(t^2c)}{t^2}
\le
\phi(h^*)
=
\min_{q(h)\ge c}
\frac12h^\top J_Ih.
\]
Combining the two bounds proves
\[
\frac{\cM_{P_t,Q_0}(t^2c)}{t^2}
\longrightarrow
\min_{h:q(h)\ge c}
\frac12h^\top J_Ih.
\]
If the limiting minimizer is unique, every sequence of asymptotically minimizing strict-failure actions converges after rescaling to it.  This proves the robustness half of Theorem~\ref{thm:local-repair-robustness-v2} while preserving the strict global definition of failure.

\subsection{Generalized-mode form of the repair problem}

For $0<c<c_0$, the repair constraint is active.  The Lagrangian is
\[
\mathcal L_{\mathrm{rep}}(h,\lambda)
=
\frac12h^\top J_Ih
+\lambda\left\{
\frac12(h-\delta)^\top G(h-\delta)-c
\right\},
\qquad
\lambda\ge0.
\]
Stationarity gives
\[
(J_I+\lambda G)h
=
\lambda G\delta,
\]
so
\begin{equation}
\label{eq:app-v2-repair-KKT}
h_{\mathrm{rep}}(\lambda)
=(J_I+\lambda G)^{-1}\lambda G\delta.
\end{equation}
There is a unique $\lambda>0$ for which the regret constraint is active.  At $c=0$, the minimizer is $h=\delta$; for $c\ge c_0$, the minimizer is $h=0$.

Let $v_i$ be $J_I$-orthonormal generalized eigenvectors satisfying
\[
Gv_i=\gamma_iJ_Iv_i,
\qquad
\gamma_1\ge\cdots\ge\gamma_d>0,
\]
and write
\[
\delta=\sum_i d_iv_i,
\qquad
h=\sum_i x_iv_i.
\]
Equation~\eqref{eq:app-v2-repair-KKT} becomes
\[
x_i^{\mathrm{rep}}
=
\frac{\lambda\gamma_i}{1+\lambda\gamma_i}d_i,
\]
where $\lambda$ is the unique positive solution of
\[
\frac12\sum_i
\frac{\gamma_i d_i^2}{(1+\lambda\gamma_i)^2}
=c.
\]

\subsection{Generalized-mode form of the failure problem}

For $c>c_0$, every limiting minimizer lies on
\[
\frac12(h-\delta)^\top G(h-\delta)=c.
\]
Using the equality-constrained Lagrangian
\[
\mathcal L_{\mathrm{fail}}(h,\lambda)
=
\frac12h^\top J_Ih
-\lambda\left\{
\frac12(h-\delta)^\top G(h-\delta)-c
\right\},
\qquad
\lambda\ge0,
\]
stationarity gives
\[
(J_I-\lambda G)h
=-\lambda G\delta.
\]
On the ordinary branch, the second-order condition requires
\[
0<\lambda<\gamma_1^{-1},
\]
and the generalized coordinates satisfy
\[
x_i^{\mathrm{fail}}(\lambda)
=-\frac{\lambda\gamma_i}{1-\lambda\gamma_i}d_i.
\]
The active boundary condition is the secular equation
\[
\Phi(\lambda)
:=
\frac12\sum_i
\frac{\gamma_i d_i^2}{(1-\lambda\gamma_i)^2}
=c.
\]
The function $\Phi$ is strictly increasing on $(0,\gamma_1^{-1})$ whenever $\delta\ne0$, with $\Phi(0)=c_0$.

If $\delta$ has a nonzero component in the leading generalized eigenspace $E_1$, then
\[
\Phi(\lambda)\to\infty
\qquad
\text{as }\lambda\uparrow\gamma_1^{-1},
\]
so every $c>c_0$ has a unique ordinary-branch solution.

If instead
\[
\delta\perp_{J_I}E_1,
\]
equivalently $d_i=0$ whenever $\gamma_i=\gamma_1$, the endpoint value is finite:
\[
c_{\mathrm{hard}}
:=
\lim_{\lambda\uparrow\gamma_1^{-1}}\Phi(\lambda)
=
\frac12
\sum_{\gamma_i<\gamma_1}
\frac{\gamma_i d_i^2}{(1-\gamma_i/\gamma_1)^2}.
\]
For $c_0<c<c_{\mathrm{hard}}$, the secular equation still has a unique solution with $0<\lambda<\gamma_1^{-1}$.  At $c=c_{\mathrm{hard}}$, the multiplier reaches $\lambda=\gamma_1^{-1}$ and the leading-eigenspace component remains zero.  For $c>c_{\mathrm{hard}}$, the nonleading coordinates are fixed at
\[
x_i^0
=-\frac{\gamma_i}{\gamma_1-\gamma_i}d_i,
\qquad
\gamma_i<\gamma_1,
\]
while the leading component $z\in E_1$ satisfies
\[
\norm{z}_{J_I}^2
=
\frac{2(c-c_{\mathrm{hard}})}{\gamma_1}.
\]
Thus the minimizing set is
\[
h^0+z,
\qquad
h^0=\sum_{\gamma_i<\gamma_1}x_i^0v_i,
\]
with two choices when $E_1$ is one-dimensional and a sphere of choices when its dimension is larger.  This is the standard hard case of the corresponding trust-region problem after the change of variables $y=G^{1/2}(h-\delta)$ \cite[Chapter~4]{nocedalwright2006}.

When $\delta=0$, we have $c_0=c_{\mathrm{hard}}=0$, so every $c>0$ lies on the hard-case branch.  The Rayleigh quotient then gives
\[
m(c;0)=\frac{c}{\gamma_1},
\]
and any leading generalized eigenvector scaled to satisfy $h^\top Gh/2=c$ is a cheapest local failure direction.

\subsection{Why uniformity and localization are separate requirements}

Uniform expansions on bounded $h$-sets identify the limiting objective and boundary geometry.  Localization ensures that optimizing sequences actually remain inside such bounded sets.  Neither property implies the other.  In particular, the local expansion of $I_{Q_0}(a_0+t h)$ does not exclude a remote action whose exact contracted information is also $O(t^2)$.  The information-gap condition in (M4) supplies the global separation needed to rule out that branch.

%% file: appendix-restricted-families.tex
\section{Restricted-Family Operator Details and Gaussian Checks}
\label{app:restricted-family-details}

This appendix supplies the operator calculations behind Section~\ref{sec:restricted-families} and the finite Gaussian checks used in Section~\ref{sec:examples}.  The first part separates restricted decision coverage from restricted Fisher cost at the operator level.  The second part works through Gaussian examples where the two effects can be seen directly.

\subsection{Minimum energy in a restricted tangent space}

Let $\mathcal H=L_0^2(Q_0)$, let $E=\operatorname{ran}B$ be the finite-dimensional effective action space, and let $T\subset\mathcal H$ be the closed linear tangent space used to approximate the family locally.  Write
\[
B_T=B|_T:T\to E.
\]
Its adjoint is
\[
B_T^*=\Pi_TB^*,
\]
so
\[
B_TB_T^*
=
B\Pi_TB^*
=:C_T.
\]
Because $E$ is finite-dimensional,
\[
\operatorname{ran}(C_T)
=
\operatorname{ran}(B_T)
=B(T)
=:S_T.
\]
Indeed,
\[
\ker(C_T)=\ker(B_T^*)=\operatorname{ran}(B_T)^\perp,
\]
and taking orthogonal complements gives the result.

For $h\in S_T$, the minimum-norm solution of $B_Ts=h$ is
\[
s_{T,h}^*
=B_T^*(B_TB_T^*)^\dagger h
=\Pi_TB^*C_T^\dagger h.
\]
Therefore
\begin{equation}
\label{eq:app-v2-restricted-energy}
\inf_{s\in T:B[s]=h}
\frac12\norm{s}_{L^2(Q_0)}^2
=
\frac12h^\top C_T^\dagger h,
\qquad h\in S_T.
\end{equation}
If $h\notin S_T$, the constrained set is empty and the value is $+\infty$.  This proves Proposition~\ref{prop:restricted-local-information-v2}.

Equation~\eqref{eq:app-v2-restricted-energy} is a statement about the linearized family.  For a nonlinear approximation family $\cQ$, identifying this quadratic form with the Hessian of the exact restricted contraction would require
\[
I_{Q_0,\cQ}(a_0+t h)
=
\frac{t^2}{2}h^\top C_T^\dagger h+o(t^2),
\]
which in turn requires regular family paths realizing the relevant tangent scores, a compatible second-order KL expansion, and localization.  The main text therefore uses $C_T^\dagger$ as a tangent Fisher-energy diagnostic without assuming this stronger nonlinear identification.

\subsection{Restricted versus ambient tangent cost}

The ambient minimum-energy lift of a reachable displacement $h$ is
\[
s_h^*=B^*J_Ih,
\qquad
J_I=(BB^*)^{-1},
\]
with tangent cost
\[
I^{\mathrm{tan}}(h)
=
\frac12h^\top J_Ih.
\]
The restricted tangent cost is
\[
I_T^{\mathrm{tan}}(h)
=
\frac12h^\top C_T^\dagger h.
\]
Since the restricted problem minimizes over the smaller score space $T$,
\[
I_T^{\mathrm{tan}}(h)
\ge
I^{\mathrm{tan}}(h),
\qquad h\in S_T.
\]
For $S_T\neq\{0\}$, define
\[
\kappa_T
=
\sup_{h\in S_T\setminus\{0\}}
\frac{h^\top C_T^\dagger h}
{h^\top J_Ih}.
\]
When $S_T=\{0\}$, set $\kappa_T=1$ by convention.

\begin{proposition}[Equality condition for local information efficiency]
\label{prop:app-v2-kappa-equality}
The restricted and ambient tangent costs agree for every $h\in S_T$ if and only if
\[
B^*J_I(S_T)\subseteq T.
\]
With the convention $\kappa_T=1$ when $S_T=\{0\}$, this is equivalent to $\kappa_T=1$ exactly when the restricted tangent contains the ambient minimum-information lift of every decision displacement it can realize.
\end{proposition}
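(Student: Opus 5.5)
The argument compares the two minimum-norm problems directly. For $h\in S_T$ the ambient problem $\inf\{\tfrac12\|s\|^2:s\in\mathcal H,\,B[s]=h\}$ has the unique minimizer $s_h^*=B^*J_Ih$, by the minimum-information lift formula \eqref{eq:minimum-information-score}. The restricted problem over $T$ has the same objective on the smaller affine set $\{s\in T:B[s]=h\}$. Its value is $\tfrac12h^\top C_T^\dagger h$ by \eqref{eq:app-v2-restricted-energy}, and the infimum is attained at $s_{T,h}^*$.

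For sufficiency, assume $B^*J_I(S_T)\subseteq T$. Then for each $h\in S_T$ the ambient minimizer $s_h^*$ lies in $T$ and satisfies $B[s_h^*]=h$, so it is feasible for the restricted problem. Hence $I_T^{\mathrm{tan}}(h)\le\tfrac12\|s_h^*\|^2=I^{\mathrm{tan}}(h)$. Together with the reverse inequality $I_T^{\mathrm{tan}}\ge I^{\mathrm{tan}}$ already recorded, this gives equality.

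For necessity, assume $I_T^{\mathrm{tan}}(h)=I^{\mathrm{tan}}(h)$ for some $h\in S_T$. The restricted minimizer $s_{T,h}^*$ is then a feasible point of the ambient problem that attains the ambient minimum value. The ambient minimizer is unique, because the objective is strictly convex on a closed affine subspace of a Hilbert space; equivalently, any feasible $s$ decomposes orthogonally as $s_h^*+k$ with $k\in\ker B$ and $\|s\|^2=\|s_h^*\|^2+\|k\|^2$. Therefore $s_{T,h}^*=s_h^*$, so $s_h^*\in T$. Since $h\in S_T$ was arbitrary, $B^*J_I(S_T)\subseteq T$. Uniqueness of the ambient minimizer is the only step that needs care, and the orthogonal-decomposition identity settles it.

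For the $\kappa_T$ statement, the case $S_T=\{0\}$ is vacuous on both sides: $B^*J_I(\{0\})=\{0\}\subseteq T$, and $\kappa_T=1$ by convention. When $S_T\ne\{0\}$, the ratio $h^\top C_T^\dagger h/h^\top J_Ih$ is at least $1$ pointwise on $S_T\setminus\{0\}$. Agreement of the costs everywhere therefore gives $\kappa_T=1$ immediately. The converse needs one more observation, since $\kappa_T$ is a supremum and a supremum equal to $1$ must still force every ratio to equal $1$. It does: the supremum equals $1$ and each ratio is at least $1$, so every ratio equals $1$, which is pointwise equality of the costs. The supremum is in fact attained, because the ratio is a generalized Rayleigh quotient on the finite-dimensional space $S_T$ and $J_I\succ0$ on $E$, but this is not needed. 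Combining with the first part gives $\kappa_T=1$ if and only if $T$ contains the lift $s_h^*$ of every realizable displacement $h\in S_T$.
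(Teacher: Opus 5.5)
Your proposal is correct and follows the paper's own argument: feasibility of $s_h^*=B^*J_Ih$ in $T$ gives sufficiency, and uniqueness of the ambient minimum-norm solution forces the restricted minimizer to equal $s_h^*$, which gives necessity. Your orthogonal decomposition $s=s_h^*+k$ with $k\in\ker B$, and your step from the supremum to pointwise equality for $\kappa_T$, spell out details the paper leaves implicit.
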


\begin{proof}
For each reachable $h$, the ambient minimum-norm solution $s_h^*=B^*J_Ih$ is unique because it is the orthogonal projection of the origin onto the affine space
\[
\{s\in\mathcal H:B[s]=h\}.
\]
If $s_h^*\in T$, it is feasible for the restricted problem and the two minima agree.  Conversely, if the minima agree, a restricted minimizer must attain the unique ambient minimum norm, hence must equal $s_h^*$ and therefore lie in $T$.
\end{proof}

Thus information efficiency on a proper reachable image requires only the horizontal lifts of directions in $S_T$ to lie in $T$; the entire range of $B^*$ need not be contained in the restricted tangent.

\subsection{Coverage residual and the scalar defect \texorpdfstring{$\eta_T$}{etaT}}

If $S_T\neq\{0\}$, let $U$ be a full-column-rank matrix whose columns span $S_T$.  The $G$-orthogonal projection onto $S_T$ is
\[
P_T^G
=
U(U^\top GU)^{-1}U^\top G,
\]
and the corresponding residual matrix is
\[
R_T
=
G-GU(U^\top GU)^{-1}U^\top G.
\]
If $S_T=\{0\}$, set $R_T=G$.  In either case,
\[
\inf_{s\in S_T}
\frac12(h-s)^\top G(h-s)
=
\frac12h^\top R_Th.
\]
Whitening by the ambient information metric gives
\[
\mathcal E_\rho(T)
=
\sup_{\|z\|^2\le2\rho}
\frac12z^\top
J_I^{-1/2}R_TJ_I^{-1/2}z.
\]
Hence
\[
\mathcal E_\rho(T)=\rho\eta_T,
\qquad
\eta_T
=
\lambda_{\max}(J_I^{-1/2}R_TJ_I^{-1/2}),
\]
which proves Proposition~\ref{prop:restricted-family-coverage-v2}.  If $r_T=\dim S_T$, Theorem~\ref{thm:effective-dimension} gives
\[
\eta_T\ge\gamma_{r_T+1},
\]
with equality when $S_T$ is an oracle rank-$r_T$ generalized-eigenvector subspace.

\subsection{Equal rank can have different coverage}

Consider
\[
J_I=
\begin{pmatrix}
1&0\\0&4
\end{pmatrix},
\qquad
G=
\begin{pmatrix}
5&6\\6&20
\end{pmatrix}.
\]
Information whitening gives
\[
J_I^{-1/2}GJ_I^{-1/2}
=
\begin{pmatrix}
5&3\\3&5
\end{pmatrix},
\]
with generalized eigenvalues $8$ and $2$.  The leading $J_I$-orthonormal mode is
\[
v_1=2^{-1/2}(1,1/2)^\top,
\]
so the oracle rank-one image $S_*=\operatorname{span}(v_1)$ has
\[
\eta_*=2.
\]

For $S_1=\operatorname{span}(e_1)$, optimizing the retained first coordinate leaves residual regret
\[
\frac12\left(20-\frac{36}{5}\right)y^2
=
\frac{32}{5}y^2.
\]
Under
\[
\frac12(x^2+4y^2)\le\rho,
\]
we have $y^2\le\rho/2$, hence
\[
\mathcal E_\rho(S_1)=\frac{16}{5}\rho.
\]
For $S_2=\operatorname{span}(e_2)$, the residual is
\[
\frac12\left(5-\frac{36}{20}\right)x^2
=
\frac{8}{5}x^2,
\]
and $x^2\le2\rho$, giving the same value.  Therefore
\[
\eta_{S_*}=2,
\qquad
\eta_{S_1}=\eta_{S_2}=\frac{16}{5}.
\]
All three images have rank one, but only the generalized-eigenvector image attains the oracle coverage bound.

\subsection{Centered mean-field Gaussian tangent geometry}

Let $Q_0=N(0,I_2)$ and compare the centered full-Gaussian family
\[
\cQ_{\mathrm{full}}
=\{N(0,\Sigma):\Sigma\succ0\}
\]
with the centered mean-field family
\[
\cQ_{\mathrm{MF}}
=\{N(0,\operatorname{diag}(\sigma_1^2,\sigma_2^2)):\sigma_1^2,\sigma_2^2>0\}.
\]
For a symmetric matrix $H$, the covariance path
\[
Q_t=N(0,I_2+tH)
\]
has score
\begin{equation}
\label{eq:app-v2-gaussian-score}
s_H(\theta)
=
\frac12\left\{
\theta^\top H\theta-\operatorname{tr}(H)
\right\}.
\end{equation}
Standard Gaussian fourth-moment identities give
\begin{equation}
\label{eq:app-v2-gaussian-score-norm}
\norm{s_H}_{L^2(Q_0)}^2
=
\frac12\operatorname{tr}(H^2).
\end{equation}
The full centered Gaussian tangent contains both diagonal and off-diagonal quadratic scores.  The centered mean-field tangent contains the variance scores $\theta_1^2-1$ and $\theta_2^2-1$ but not the correlation score $\theta_1\theta_2$.

\subsection{Second-moment decision: pure coverage loss}

Let
\[
\phi(\theta)
=
\begin{pmatrix}
\theta_1^2\\
\sqrt2\,\theta_1\theta_2\\
\theta_2^2
\end{pmatrix}.
\]
Under weighted squared loss for the action $a\in\mathbb R^3$, a covariance perturbation $H$ induces action displacement
\[
h=(H_{11},\sqrt2H_{12},H_{22})^\top.
\]
Because
\[
\operatorname{tr}(H^2)=\norm h^2,
\]
Equations~\eqref{eq:app-v2-gaussian-score}--\eqref{eq:app-v2-gaussian-score-norm} give
\[
\frac12h^\top J_Ih
=
\frac14\norm h^2,
\qquad
J_I=\frac12I_3.
\]
With
\[
W=\operatorname{diag}(1,\omega,1),
\]
the regret Hessian is $G=2W$.  The centered mean-field decision image is
\[
S_{\mathrm{MF}}=\operatorname{span}\{e_1,e_3\},
\]
so the residual contains only the cross-moment coordinate and
\[
\eta_{\mathrm{MF}}=4\omega.
\]
The ambient minimum-information lifts of the two reachable diagonal directions already lie in the mean-field tangent, hence
\[
\kappa_{\mathrm{MF}}=1.
\]
This is the pure coverage defect used in Section~\ref{subsec:mf-covariance-example-v2}.

\subsection{Mixed scalar moment: complete coverage with excess cost}

Let
\[
g_\alpha(\theta)
=
\theta_1\theta_2+\alpha(\theta_1^2-1),
\qquad
\alpha\ne0.
\]
At $Q_0=N(0,I_2)$,
\[
\operatorname{Var}(\theta_1\theta_2)=1,
\qquad
\operatorname{Var}(\theta_1^2-1)=2,
\]
and the two Hermite components are orthogonal.  Thus
\[
BB^*
=
\operatorname{Var}(g_\alpha)
=
1+2\alpha^2,
\qquad
J_I=(1+2\alpha^2)^{-1}.
\]
Projection onto the centered mean-field tangent removes the correlation term, leaving
\[
\Pi_{T_{\mathrm{MF}}}g_\alpha
=
\alpha(\theta_1^2-1),
\]
so
\[
C_{\mathrm{MF}}
=B\Pi_{T_{\mathrm{MF}}}B^*
=2\alpha^2.
\]
Because $C_{\mathrm{MF}}>0$, the scalar decision image is all of $\mathbb R$ and
\[
\eta_{\mathrm{MF}}=0.
\]
The information-inflation factor is
\[
\kappa_{\mathrm{MF}}
=
\frac{1+2\alpha^2}{2\alpha^2}
=
1+\frac1{2\alpha^2}.
\]
At $\alpha=1/\sqrt2$, this gives $\kappa_{\mathrm{MF}}=2$.  This is the complete-coverage, inefficient-lift example used in Section~\ref{subsec:mf-efficiency-example-v2}.

\subsection{Exact finite-Gaussian check of the local coverage approximation}

Within the centered full-Gaussian family, fix an off-diagonal covariance $c$ and minimize
\[
\KL\!\left(N(0,\Sigma)\,\middle\|\,N(0,I_2)\right)
\]
over positive-definite covariances with $\Sigma_{12}=c$.  Symmetry and the first-order condition give
\[
\Sigma_c
=
\begin{pmatrix}
x&c\\c&x
\end{pmatrix},
\qquad
x=\frac{1+\sqrt{1+4c^2}}2,
\]
and the defining equation for $x$ gives
\[
\det(\Sigma_c)=x.
\]
The exact Gaussian information radius is therefore
\[
\rho(c)
=x-1-\frac12\log x.
\]
The centered mean-field family has zero cross second moment for every member.  After matching the reachable diagonal coordinates, its residual weighted regret is
\[
\Delta_{\mathrm{MF}}(c)=2\omega c^2.
\]
Since
\[
\rho(c)=\frac12c^2+O(c^4),
\]
we obtain
\[
4\omega\rho(c)
=
2\omega c^2+O(c^4).
\]
Thus the exact finite-Gaussian calculation is tangent to the local prediction
\[
\mathcal E_\rho(T_{\mathrm{MF}})=4\omega\rho
\]
at the baseline.

\subsection{Structured-Gaussian KL and decision-ranking check}
\label{app:structured-gaussian-check}

The final example in Section~\ref{subsec:structured-gaussian-comparison} can also be checked analytically block by block.  For a two-dimensional centered Gaussian target with unit variances and correlation $r$,
\[
P_r
=
N\!\left(
0,
\begin{pmatrix}
1&r\\r&1
\end{pmatrix}
\right),
\]
consider a centered Gaussian approximation with the same unit variances and correlation $q$.  Its reverse KL is
\[
K(q;r)
=
\frac12\left\{
\operatorname{tr}(\Sigma_r^{-1}\Sigma_q)-2
+\log\frac{1-r^2}{1-q^2}
\right\}.
\]
Differentiation gives
\[
\frac{\partial K}{\partial q}
=
-\frac{r}{1-r^2}
+
\frac{q}{1-q^2}.
\]
The map $u\mapsto u/(1-u^2)$ is strictly increasing on $(-1,1)$, so the unique minimizer is $q=r$.  If a family forbids the correlation and forces $q=0$, its KL penalty for that block is
\[
k(r)
:=
K(0;r)
=
\frac{1}{1-r^2}-1
+\frac12\log(1-r^2).
\]
For the two target correlations in Section~\ref{subsec:structured-gaussian-comparison},
\[
k(0.35)=0.0743,
\qquad
k(0.70)=0.6241.
\]
Because the four-dimensional target is block diagonal, the KL divergences add across the two blocks.  Hence
\[
\KL(Q_{12}\|P)=k(0.35)=0.0743,
\]
\[
\KL(Q_{34}\|P)=k(0.70)=0.6241,
\]
and
\[
\KL(Q_{\mathrm{MF}}\|P)=k(0.35)+k(0.70)=0.6984.
\]

For
\[
z=2^{-1/2}(1,1)^\top,
\]
a unit-variance Gaussian block with correlation $r$ satisfies
\[
\E(z^\top\theta)^2
=z^\top\Sigma_r z
=1+r.
\]
Under squared-error loss for this second moment, replacing the target correlation $r$ by zero therefore incurs regret
\[
r^2.
\]
Thus the two decision penalties in the main-text table are
\[
0.70^2=0.4900,
\qquad
0.35^2=0.1225.
\]
The same finite calculation therefore produces both orderings emphasized in the example: reverse KL favors the family that captures the stronger correlation, while each downstream second-moment decision favors the family that represents its own block.